\newcommand{\bm}[1]{\boldsymbol{#1}}
\newcommand{\bmr}[1]{\bm{\mr{#1}}}
\newcommand{\lj}{[ \hspace{-2pt} [}
\newcommand{\rj}{] \hspace{-2pt} ]}
\newcommand{\mb}[1]{\mathbb{#1}}
\newcommand{\mc}[1]{\mathcal{#1}}
\newcommand{\mr}[1]{\mathrm{#1}}
\newcommand{\jump}[1]{\lj #1 \rj}
\newcommand{\wt}[1]{ \widetilde{ #1}}
\newcommand{\tr}[1]{\ifmmode \mathrm{tr}\left( #1 \right) \else 
\text{tr} \left( #1 \right) \fi }
\newcommand\enorm[1]{|\!|\!| #1 |\!|\!|}
\renewcommand{\d}[1]{\mathrm d \boldsymbol{#1}}
\newcommand\pnorm[1]{\| #1 \|_{\bm{\mathrm{p}}}}
\newcommand\unorm[1]{\| #1 \|_{\bm{\mathrm{u}}}}
\newcommand\comment[1]{}
\def\MTh{\mc{T}_h}
\def\MEh{\mc{E}_h}
\def\MFh{\mc{F}_h}
\def\un{\bm{\mr{n}}}
\def\curl{\ifmmode \mathrm{curl} \else \text{curl}\fi}
\def\Ned{\ifmmode \text{N\'ed\'elec} \else \text{N\'ed\'elec} \fi}
\newtheorem{theorem}{Theorem}
\newtheorem{lemma}{Lemma}
\newtheorem{remark}{Remark}
\title[LS Method for Maxwell]{A Discontinuous Least Squares Finite
  Element Method for Time-Harmonic Maxwell Equations}
\author[R. Li]{Ruo Li} \address{CAPT, LMAM and School of Mathematical
  Sciences, Peking University, Beijing 100871, P.R. China}
\email{rli@math.pku.edu.cn}
\author[Q.-C. Liu]{Qicheng Liu} \address{School of Mathematical
  Sciences, Peking University, Beijing 100871, P.R. China}
\email{qcliu@pku.edu.cn}
\author[F.-Y. Yang]{Fanyi Yang} \address{School of Mathematical
  Sciences, Peking University, Beijing 100871, P.R. China}
\email{yangfanyi@pku.edu.cn}
\begin{document}

\begin{abstract}
We propose and analyze a discontinuous least squares finite element
method for solving the indefinite time-harmonic Maxwell equations.
The scheme is based on the $L^2$ norm least squares functional with the
weak imposition of the continuity across the interior faces. We
minimize the functional over the piecewise polynomial spaces to seek
numerical solutions. The method is shown to be stable without any
constraint on the mesh size.  We prove the convergence orders under
both the energy norm and the $L^2$ norm.  Numerical results in two and
three dimensions are presented to verify the error estimates.

\noindent \textbf{keywords}: Time-harmonic Maxwell's equations, Least
squares method, Discontinuous elements.
\end{abstract}


\maketitle

\section{Introduction}
\label{sec_introduction}
The time-harmonic Maxwell equations are often encountered in many
engineering applications such as antenna design, microwaves, and
satellites \cite{Hiptmair2002finite, Nguyen2011hybridizable}. These
applications require us to carry out numerical studies of the
time-harmonic Maxwell equations. The Maxwell's operator is strongly
indefinite especially for the case of the high wave number, which
brings many difficulties in the numerical simulation and the error
analysis \cite{Lu2017absolutely}. Despite these difficulties,
there are plenty of studies on numerical methods for this problem,
such as finite difference methods, spectral methods, and finite
element methods.

There are a variety of finite element methods for solving the
time-harmonic Maxwell equations and related problems. A common
choice is to employ the $H(\curl)$-conforming elements, known as edge
elements. We refer to \cite{Nedelec1986new, Nedelec1980mixed,
Monk2003finite, Hiptmair2002finite, Chen2007adaptive,
Jiang2013adaptive} for more details of these conforming methods. We
note that the implementation of edge elements is still challenging
especially in the higher-order case \cite{Bramble2005least}.

Using discontinuous functions to approximate the solution, the
discontinuous Galerkin (DG) methods have been applied in the numerical
simulation of the Maxwell equations. The DG methods can offer great
flexibility in the mesh structure which allows the elements of
different shapes and can easily handle the irregular non-conforming
meshes. Additionally, the implementation of discontinuous elements is
very straightforward and there is no need to use the curl-conforming
elemental mappings \cite{Lu2017absolutely}. We refer to
\cite{Lu2017absolutely, Feng2014absolutely, Houston2005interior,
Sarmany2010optimal, Nguyen2011hybridizable, Brenner2007locally,
Perugia2002stabilized, Perugia2003local, Chen2019analysis} and the
references therein for those DG methods.

The least squares finite element method (LSFEM) is a general technique
in numerical PDEs which is based on the minimization of the $L^2$ norm
of the residual over an approximation space, and we refer to the paper
\cite{Bochev1998review} for an overview. The LSFEM has also been
applied to solve the Maxwell system and we refer to
\cite{Bramble2005least, Bramble2005approximation, Jagalur2013galerkin,
Hu2019variant, Hu2014plane} for such least squares methods. One of
the advantages of LSFEM is that the resulting linear system is always
symmetric and positive definite. Recently, the LSFEM has been extended
for the numerical approximation to some classical PDEs using
discontinuous elements, and this method is referred to as the
discontinuous LSFEM. We refer the readers to \cite{Bensow2005div,
Bensow2005discontinuous, Bochev2013nonconforming, Bochev2012locally,
li2019least, li2019sequential} for more details.

Noticing that the Maxwell's operator is indefinite while the LSFEM
always provides a positive definite linear system, we are motivated to
develop a stable numerical method, based on the least squares
functional and discontinuous elements, to solve the time-harmonic
Maxwell equations. For this purpose, we define an $L^2$ norm least
squares functional involving the proper penalty terms which weakly
enforce the tangential continuity across the interior faces as well as
the boundary conditions. The functional is then minimized over the
discontinuous piecewise polynomial spaces to seek the numerical
solutions. The method is easy to be implemented and the resulting
linear system is shown to be symmetric and positive definite. The
method combines the attractive features of DG methods and LSFEMs,
giving us a new discontinuous least squares finite element method for
the time-harmonic Maxwell equations.

We estimate the error of the new method to derive the convergence
rates for all solution variables under both the $L^2$ norms and the
energy norms. It is interesting that the proposed method is shown to
be unconditionally stable without making any assumptions about the
mesh size. We carry out a series of numerical tests in two and three
dimensions to verify the theoretical predictions. Noticing that the
least squares functional can serve as an {\it a posteriori} estimator,
we particularly present a low-regularity example, which is solved by
the $h$-adaptive refinement strategy using the least squares 
functional as an adaptive indicator.

The rest of this paper is organized as follows. In Section
\ref{sec_preliminaries}, we introduce the notations and define the
first-order system to the time-harmonic Maxwell equations. In Section
\ref{sec_method}, we present our least squares method and define the
least squares functional. The error estimates are also proven in this
section. In Section \ref{sec_numericalresults}, we present a series of
numerical examples to illustrate the accuracy of the proposed method.


\section{Preliminaries} 
\label{sec_preliminaries}
Let $\Omega$ be an open, bounded polygonal (polyhedral) domain with
the boundary $\partial \Omega$ in $\mb{R}^d$, $d = 2, 3$. We denote by
$\MTh$ a regular and shape-regular partition over the domain $\Omega$
into triangles (tetrahedrons). We let $\MFh^i$ be the collection of
all $d - 1$ dimensional interior faces with respect to the partition
$\MTh$, and we let $\MFh^b$ be the collection of all $d - 1$
dimensional faces that are on the boundary $\partial \Omega$, and then
we set $\MFh := \MFh^i \cup \MFh^b$. In particular, for the
three-dimensional case we denote by $\MEh^i$ the set of all $d - 2$
dimensional interior edges of all elements in $\MTh$, and by $\MEh^b$
the set of all $d - 2$ dimensional boundary edges, and we let $\MEh :=
\MEh^i \cup \MEh^b$. For the element $K \in \MTh$ and the face $f \in
\MFh$, we let $h_K$ and $h_f$ be their diameters, respectively, and we
denote $h := \max_{K \in \MTh} h_K$ as the mesh size of $\MTh$.  Then,
the shape-regularity of $\MTh$ is in the sense of that there exists a
constant $C > 0$ such that for any element $K \in \MTh$, 
\begin{displaymath}
  \frac{h_K}{\rho_K} \leq C,
\end{displaymath}
where $\rho_K$ denotes the diameter of the largest disk (ball)
inscribed in $K$. 

Next, we introduce the following trace operators that are commonly
used in the DG framework. Let $f \in \MFh^i$ be an interior face
shared by two adjacent elements $K^+$ and $K^-$ with the unit outward
normal vectors $\un^+$ and $\un^-$ on $f$, respectively. For the
piecewise smooth scalar-valued function $v$ and the piecewise smooth
vector-valued function $\bm{q}$, we define the jump operator $\jump{
\cdot}$ on $f$ as 
\begin{displaymath}
  \jump{\un \times v} := \un^+ \times v|_{K^+} +  \un^-\times
  v|_{K^-}, \quad \jump{\un \times \bm{q}} := \un^+ \times
  \bm{q}|_{K^+} + \un^- \times \bm{q}|_{K^-}.
\end{displaymath}
For a boundary face $f \in \MFh^b$, we let $K \in \MTh$ be the element
that contains $f$ and the jump operator $\jump{\cdot}$ on $f$ is
defined as
\begin{displaymath}
  \jump{v} := \un \times v|_K, \quad \jump{\un \times \bm{q}} := \un
  \times \bm{q}|_{K},
\end{displaymath}
where $\un$ is the unit outward normal on $f$.

We note that the capital $C$ with or without subscripts are generic
positive constants, which are possibly different from line to line,
are independent of the mesh size $h$ but may depend on the wave number
$k$ and the domain $\Omega$. Given a bounded domain
$D \subset \Omega$, we will follow the standard notations for the
Sobolev spaces $L^2(D)$, $L^2(D)^d$, $H^r(D)$ and $H^r(D)^d$ with the
regular exponent $r \geq 0$. We also use the standard definitions of
their corresponding inner products, semi-norms and norms. Throughout
the paper, we mainly use the notation for the three-dimensional
case. For the case $d = 2$, it is natural to identity the space
$\mb{R}^2$ with the $(x, y)$ plane in $\mb{R}^3$.  Specifically, in
two dimensions for the vector-valued function $\bm{u} = (u_1, u_2)^T$,
the curl of $\bm{u}$ reads
\begin{displaymath}
  \nabla \times \bm{u} = \frac{\partial u_2 }{\partial x} -
  \frac{\partial u_1}{\partial y}, 
\end{displaymath}
and for the scalar-valued function $q$, we let $\nabla \times q$ be
the formal adjoint, which reads
\begin{displaymath}
  \nabla \times q = \left( \frac{\partial q}{\partial y},
  - \frac{\partial q}{\partial x} \right)^T.
\end{displaymath}
For the problem domain $\Omega$, we define the space 
\begin{displaymath}
  \begin{aligned}
    H(\curl) &:= \begin{aligned}
      &\left\{ v \in L^2(\Omega) \ | \  \nabla \times v \in
      L^2(\Omega)^2 \right\},  \text{ for scalar-valued functions}, \\
      &\left\{ \bm{v} \in L^2(\Omega)^2 \ | \ \nabla \times \bm{v} \in
      L^2(\Omega) \right\},  \text{ for vector-valued functions}, \\
    \end{aligned} \quad d = 2, \\
    H(\curl) & := \left\{ \bm{v} \in L^2(\Omega)^3 \ | \ \nabla \times
    \bm{v} \in L^2(\Omega)^3 \right\}, \quad d = 3. \\
  \end{aligned}
\end{displaymath}
Further, we denote by $H_0(\curl)$ the space of functions in
$H(\curl)$ with vanishing tangential trace, 
\begin{displaymath}
  H_0(\curl) := \left\{ \bm{v} \in H(\curl)\ | \  \un \times
  \bm{v} = 0, \text{ on } \partial \Omega \right\}.
\end{displaymath}
For the partition $\MTh$, we will use the notations and the
definitions for the broken Sobolev space $L^2(\MTh)$, $L^2(\MTh)^d$,
$H^r(\MTh)$ and $H^r(\MTh)^d$ with the exponent $r \geq 0$ and their
associated inner products and norms \cite{arnold2002unified}.

The problem under our consideration is to find a numerical
approximation to the time-harmonic Maxwell equations in a lossless
medium with an inhomogeneous boundary condition, which seeks the
electric field $\bm{u}(\bm{x})$ such that
\begin{equation}
  \begin{aligned}
    \nabla \times \mu_r^{-1} \nabla \times \bm{u} - k^2 \varepsilon_r
    \bm{u} &= \bm{f}, \quad \text{in } \Omega, \\
    \un \times \bm{u} &= \bm{g}, \quad \text{on } \partial \Omega. \\
    \label{eq_Maxwell}
  \end{aligned}
\end{equation}
Here $\bm{f} \in L^2(\Omega)^d$ is an external source file and $k > 0$
is the wave number with the assumption $k$ is not an eigenvalue of the
Maxwell system \cite{Li2017maxwell, Monk2003finite,
Hiptmair2011stability}. $\mu_r$ and $\varepsilon_r$ are the relative
magnetic permeability and the relative electric permittivity of the
medium. For simplicity, we set $\mu_r = 1$ and $\varepsilon_r = 1$. 

Below let us propose a least squares method for the equations
\eqref{eq_Maxwell} based on the discontinuous approximation. We first
introduce an auxiliary variable $\bm{p} = \frac{1}{k} \nabla \times
\bm{u}$ to rewrite the Maxwell equations \ref{eq_Maxwell} into an
equivalent first-order system:
\begin{equation}
  \begin{aligned}
    \nabla \times \bm{p} - k \bm{u} = \wt{\bm{f}}, &\quad \text{in }
    \Omega, \\
    \nabla \times \bm{u} - k \bm{p} = \bm{0}, &\quad \text{in }
    \Omega, \\
    \un \times \bm{u} = \bm{g} & \quad \text{on } \partial \Omega,
  \end{aligned}
  \label{eq_firstMaxwell}
\end{equation}
where $\wt{\bm{f}} = \frac{1}{k} \bm{f}$. We note that to rewrite the
problem into a first-order system is the fundamental idea in the
modern least squares finite element method \cite{Bochev1998review},
and our discontinuous least squares method is then based on the system
\eqref{eq_firstMaxwell}.


\section{Discontinuous Least Squares Method for Time-Harmonic
Equations}
\label{sec_method}
Aiming to construct a discontinuous least squares finite element method for
\eqref{eq_firstMaxwell}, we first define a least squares functional
based on \eqref{eq_firstMaxwell}, which reads:
\begin{equation}
  \begin{aligned}
    J_h(\bm{u}, & \bm{p}) :=  \sum_{K \in \MTh} \left( \| \nabla
    \times \bm{p} - k \bm{u} - \wt{\bm{f}} \|_{L^2(K)}^2 + \| \nabla
    \times \bm{u} - k \bm{p} \|_{L^2(K)}^2 \right) \\
    + & \sum_{f \in \MFh^i} \frac{\mu}{h_f} \left( \| \jump{\un \times
    \bm{u} }\|_{L^2(f)}^2 + \| \jump{\un \times \bm{p}} \|_{L^2(f)}^2
    \right) + \sum_{f \in \MFh^b} \frac{\mu}{h_f} \| \un \times \bm{u}
    - \un \times \bm{g} \|_{L^2(f)}^2,
  \end{aligned}
  \label{eq_functionalJ}
\end{equation}
where $\mu$ is a positive parameter and will be specified later on. We
note that in two dimensions the variable $\bm{p}$ in
\eqref{eq_functionalJ} is scalar-valued and in three dimensions the variable
$\bm{p}$ is a vector in $\mb{R}^3$. Then, we define two approximation
spaces $\bmr{V}_h^m$ and $\bmr{\Sigma}_h^m$ for the variables $\bm{u}$
and $\bm{p}$, respectively, 
\begin{displaymath}
  \begin{aligned}
    \bmr{V}_h^m & := (V_h^m)^d, \qquad \bmr{\Sigma}_h^m := (V_h^m)^{2d -
    3}, \\
  \end{aligned}
\end{displaymath}
where $V_h^m$ is the standard piecewise polynomial space, 
\begin{displaymath}
  \begin{aligned}
    V_h^m & := \left\{ v_h \in L^2(\Omega) \  | \  v_h|_K \in
    \mb{P}_m(K), \  \forall K \in \MTh \right\}. \\
  \end{aligned}
\end{displaymath}
Clearly, the functions in $\bmr{V}_h^m$ and $\bmr{\Sigma}_h^m$ can be
discontinuous across inter-element faces.  We seek the numerical
solution $(\bm{u}_h, \bm{p}_h) \in \bmr{V}_h^m \times
\bmr{\Sigma}_h^m$ by minimizing the functional \eqref{eq_functionalJ}
over the space $\bmr{V}_h^m \times \bmr{\Sigma}_h^m$, which reads:
\begin{equation}
  (\bm{u}_h, \bm{p}_h) = \mathop{\arg \min}_{ (\bm{v}_h, \bm{q}_h) \in
  \bmr{V}_h^m \times \bmr{\Sigma}_h^m} J_h(\bm{v}_h, \bm{q}_h).
  \label{eq_minJ}
\end{equation}
To solve the minimization problem \eqref{eq_minJ}, we can write the
corresponding Euler-Lagrange equation, which takes the form: {\it find
$(\bm{u}_h, \bm{p}_h) \in \bmr{V}_h^m \times \bmr{\Sigma}_h^m$ such
that}
\begin{equation}
  a_h(\bm{u}_h, \bm{p}_h; \bm{v}_h, \bm{q}_h) = l_h(\bm{v}_h,
  \bm{q}_h), \quad \forall (\bm{v}_h, \bm{q}_h) \in \bmr{V}_h^m \times
  \bmr{\Sigma}_h^m,
  \label{eq_bilinear}
\end{equation}
where the bilinear form $a_h(\cdot; \cdot)$ and the linear form
$l_h(\cdot)$ are defined as
\begin{equation}
  \begin{aligned}
    a_h(\bm{u}_h, \bm{p}_h; \bm{v}_h, \bm{q}_h) &: = \sum_{K \in \MTh}
    \int_K (\nabla \times \bm{p}_h - k\bm{u}_h) \cdot (\nabla \times
    \bm{q}_h - k\bm{v}_h) \d{x} \\
    &+ \sum_{K \in \MTh} \int_K (\nabla \times \bm{u}_h - k\bm{p}_h)
    \cdot (\nabla \times \bm{v}_h - k \bm{q}_h) \d{x}\\
    &+ \sum_{f \in \MFh^i} \frac{\mu}{h_f} \int_f \jump{ \un \times
    \bm{u}_h } \cdot \jump{ \un \times \bm{v}_h } \d{s} \\
    &+ \sum_{f \in \MFh^i} \frac{\mu}{h_f} \int_f \jump{ \un \times
    \bm{p}_h } \cdot \jump{ \un \times \bm{q}_h } \d{s} \\
    &+ \sum_{f \in \MFh^b} \frac{\mu}{h_f} \int_f (\un \times
    \bm{u}_h) \cdot (\un \times \bm{v}_h) \d{s},
  \end{aligned}
  \label{eq_bilinearform}
\end{equation}
and
\begin{displaymath}
  \begin{aligned}
  l_h(\bm{v}_h, \bm{q}_h) & := -\sum_{K \in \MTh} \int_K \bm{f} \cdot
  \bm{v}_h \d{x} + \sum_{K \in \MTh} \int_K \nabla \times \bm{q}_h
  \cdot \wt{\bm{f}} \d{x} \\
  &+ \sum_{f \in \MFh^b} \frac{\mu}{h_f} \int_f \un \times \bm{v}_h
  \cdot \bm{g} \d{s}.
  \end{aligned}
\end{displaymath}
Then we focus on the error estimate to the problem \eqref{eq_bilinear}. 
To do so, we first define the spaces $\bmr{V}_h$ and $\bmr{\Sigma}_h$
as
\begin{displaymath}
  \bmr{V}_h = \bmr{V}_h^m + H_0(\curl), \quad \bmr{\Sigma}_h =
  \bmr{\Sigma}_h^m + H(\curl).
\end{displaymath}
We introduce the following energy norms for both the spaces: 
\begin{displaymath}
  \begin{aligned}
    \unorm{\bm{u}}^2 &= \sum_{K \in \MTh} \left( \| \bm{u}
    \|_{L^2(K)}^2 + \| \nabla \times \bm{u} \|_{L^2(K)}^2 \right) +
    \sum_{f \in \MFh} \frac{1}{h_f} \| \jump{\un \times \bm{u} }
    \|_{L^2(f)}^2, \quad \forall \bm{u} \in \bmr{V}_h, 
  \end{aligned}
\end{displaymath}
and
\begin{displaymath}
  \begin{aligned}
    \pnorm{\bm{p}}^2 &= \sum_{K \in \MTh} \left( \| \bm{p}
    \|_{L^2(K)}^2 + \| \nabla \times \bm{p} \|_{L^2(K)}^2 \right) +
    \sum_{f \in \MFh^i} \frac{1}{h_f} \| \jump{\un \times \bm{p} }
    \|_{L^2(f)}^2, \quad \forall \bm{p} \in \bmr{\Sigma}_h, 
  \end{aligned}
\end{displaymath}
and we define the energy norm $\enorm{\cdot}$ as
\begin{displaymath}
  \enorm{(\bm{u}, \bm{p})}^2 = \unorm{\bm{u}}^2 + \pnorm{\bm{p}}^2,
  \quad \forall (\bm{u}, \bm{p}) \in \bmr{V}_h \times \bmr{\Sigma}_h.
\end{displaymath}
It is easy to see that $\unorm{\cdot}$ indeed defines a norm on
$\bmr{V}_h$ and $\pnorm{\cdot}$ indeed defines a norm on
$\bmr{\Sigma}_h$. As a result, $\enorm{\cdot}$ defines a norm on
$\bmr{V}_h \times \bmr{\Sigma}_h$.  Then we state the continuity
result of the bilinear form $a_h(\cdot ; \cdot)$ with respect to the
norm $\enorm{\cdot}$. 
\begin{lemma}
  Let the bilinear form $a_h(\cdot; \cdot)$ be defined as
  \eqref{eq_bilinearform} with any positive $\mu$, there exists a
  constant $C$ such that
  \begin{equation}
    |a_h(\bm{u}, \bm{p}; \bm{v}, \bm{q})| \leq C \enorm{ (\bm{u},
    \bm{p}) } \enorm{ (\bm{v}, \bm{q}) },
    \label{eq_continuity}
  \end{equation}
  for any $(\bm{u}, \bm{p}), (\bm{v}, \bm{q}) \in \bmr{V}_h \times
  \bmr{\Sigma}_h$.
  \label{le_continuity}
\end{lemma}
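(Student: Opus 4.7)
The plan is to bound $a_h$ term by term using Cauchy--Schwarz, both pointwise (under the integral) and in the discrete sum over elements or faces, and then recognize each resulting factor as a piece of either $\unorm{\cdot}$ or $\pnorm{\cdot}$.

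First I would handle the two volume contributions. For each $K \in \MTh$, the pointwise Cauchy--Schwarz inequality gives
\[
\left|\int_K (\nabla \times \bm{p} - k\bm{u}) \cdot (\nabla \times \bm{q} - k\bm{v}) \, \d{x}\right|
\le \|\nabla \times \bm{p} - k\bm{u}\|_{L^2(K)}\,\|\nabla \times \bm{q} - k\bm{v}\|_{L^2(K)},
\]
and a triangle inequality bounds each factor by $\|\nabla \times \bm{p}\|_{L^2(K)} + k\|\bm{u}\|_{L^2(K)}$. After squaring, summing over $K$, and using the elementary inequality $(a+b)^2 \le 2(a^2+b^2)$, the discrete Cauchy--Schwarz inequality then yields a bound of the form $C(k)\,\unorm{\bm{u}}\,\pnorm{\bm{q}} + \cdots$, i.e., a product of a $\unorm{\cdot}$ factor and a $\pnorm{\cdot}$ factor, each of which is controlled by $\enorm{(\bm{u},\bm{p})}$ or $\enorm{(\bm{v},\bm{q})}$. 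The second volume term is treated identically with the roles of the test variables swapped.

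Next I would treat the jump terms. For each interior face $f \in \MFh^i$, Cauchy--Schwarz gives
\[
\left|\frac{\mu}{h_f}\int_f \jump{\un \times \bm{u}} \cdot \jump{\un \times \bm{v}}\,\d{s}\right|
\le \mu \, \frac{\|\jump{\un \times \bm{u}}\|_{L^2(f)}}{h_f^{1/2}} \cdot \frac{\|\jump{\un \times \bm{v}}\|_{L^2(f)}}{h_f^{1/2}},
\]
and an analogous bound holds for the $\bm{p},\bm{q}$ jump term. For boundary faces $f \in \MFh^b$, by the definition of $\jump{\cdot}$ on the boundary, the term $\un \times \bm{u}$ coincides with $\jump{\un \times \bm{u}}$, so exactly the same bound applies. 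Summing over faces and applying discrete Cauchy--Schwarz produces a product of the face contributions to $\unorm{\bm{u}}$ and $\unorm{\bm{v}}$ (respectively $\pnorm{\bm{p}}$ and $\pnorm{\bm{q}}$), with a constant depending only on $\mu$.

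Combining all five bounds and absorbing the wave-number-dependent constants into $C$, I obtain the desired inequality $|a_h(\bm{u},\bm{p};\bm{v},\bm{q})| \le C\,\enorm{(\bm{u},\bm{p})}\,\enorm{(\bm{v},\bm{q})}$. There is no genuine obstacle here: the estimate is a pure application of Cauchy--Schwarz, and the only point requiring a moment of care is the bookkeeping that matches the weight $\mu/h_f$ in $a_h$ against the weight $1/h_f$ in the energy norms (which only produces an extra multiplicative $\mu$) and the observation that the boundary face term fits into the $\unorm{\cdot}$ norm through the boundary portion of $\MFh$ in its definition.
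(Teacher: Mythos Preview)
Your proposal is correct and follows essentially the same approach as the paper: a straightforward term-by-term application of the Cauchy--Schwarz inequality, with the constant absorbing the dependence on $k$ and $\mu$. The paper's own proof is in fact even more terse---it exhibits the bound for the single cross term $\sum_{K}\int_K k^2\,\bm{u}\cdot\bm{v}\,\d{x}$ and then remarks that all remaining terms are handled similarly---so your write-up is, if anything, more complete.
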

\begin{proof}
  Using the Cauchy-Schwartz inequality, we have that
  \begin{equation*}
    \begin{aligned}
      \sum_{K \in \MTh} \int_K k^2 \bm{u} \cdot \bm{v} d \bm{x}
      &\leq k^2 \left(\sum_{K \in \MTh} \| \bm{u} \|_{L^2(K)}^2
      \right)^{\frac{1}{2}} \left(\sum_{K \in \MTh} \| \bm{v}
      \|_{L^2(K)}^2 \right)^{ \frac{1}{2} } \\
      &\leq k^2 \enorm{(\bm{u} , \bm{p})} \enorm{(\bm{v}, \bm{q})}.
    \end{aligned}
  \end{equation*}
  Other terms that appear in the bilinear form \eqref{eq_bilinearform} 
  can be bounded similarly, which gives us the inequality
  \eqref{eq_continuity} and completes the proof.
\end{proof}
In order to prove the coercivity of the bilinear form $a_h(\cdot,
\cdot)$, we require the following lemmas. 
\begin{lemma}
  For any $\bm{u}_h \in \bmr{V}_h^m $, there exists a piecewise
  polynomial $\bm{v}_h \in \bmr{V}_h^m \cap H_0(\curl)$ such that
  \begin{equation}
    \begin{aligned}
      &\|\bm{u}_h - \bm{v}_h \|_{L^2(\Omega)}^2 \leq C \sum_{f \in
      \MFh} h_f \| \jump{\un \times \bm{u}_h} \|^2_{L^2(f)}, \\
      &\unorm{\bm{u}_h - \bm{v}_h }^2 \leq C \sum_{f \in \MFh}
      \frac{1}{h_f} \| \jump{\un \times \bm{u}_h} \|^2_{L^2(f)}, \\
    \end{aligned}
    \label{eq_projectionerror1}
  \end{equation}
  and for any $\bm{p}_h \in \bmr{\Sigma}_h^m$, there exists a
  piecewise polynomial $\bm{w}_h \in \bmr{\Sigma}_h^m \cap H(\curl)$
  such that
  \begin{equation}
    \begin{aligned}
    & \|\bm{p}_h - \bm{w}_h \|_{L^2(\Omega)}^2 \leq C \sum_{f \in
    \MFh^i} h_f \| \jump{\un \times \bm{p}_h} \|^2_{L^2(f)}, \\
    & \pnorm{\bm{p}_h - \bm{w}_h }^2 \leq C \sum_{f \in \MFh^i}
    \frac{1}{h_f} \| \jump{\un \times \bm{p}_h} \|^2_{L^2(f)}.  \\
    \end{aligned}
    \label{eq_projectionerror2}
  \end{equation}
  \label{le_projection}
\end{lemma}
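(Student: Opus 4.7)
The plan is to construct explicit averaging (enriching) operators $E_u : \bmr{V}_h^m \to \bmr{V}_h^m \cap H_0(\curl)$ and $E_p : \bmr{\Sigma}_h^m \to \bmr{\Sigma}_h^m \cap H(\curl)$ in the spirit of Karakashian--Pascal and its $H(\curl)$ extensions (Houston--Sch\"otzau--Wihler, Brenner and coauthors), and then to simply take $\bm{v}_h := E_u \bm{u}_h$ and $\bm{w}_h := E_p \bm{p}_h$. Once these operators are in hand, the estimates \eqref{eq_projectionerror1}--\eqref{eq_projectionerror2} reduce to an element-by-element scaling argument.

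Concretely, I would fix a basis of $\bmr{V}_h^m \cap H_0(\curl)$ built from Nédélec-type tangentially continuous degrees of freedom on edges (in 3D) and on faces (in 2D), together with interior (bubble) degrees of freedom. Because $\bmr{V}_h^m$ is the fully discontinuous piecewise polynomial space, each degree of freedom is either interior to a single element $K$ -- in which case $E_u$ preserves the value -- or associated with a shared interior face/edge, in which case $E_u$ assigns to it the arithmetic mean of the tangential traces of $\bm{u}_h$ from the adjacent elements. For degrees of freedom lying on $\partial\Omega$, the value is set to zero so that $E_u \bm{u}_h \in H_0(\curl)$. The operator $E_p$ is defined symmetrically, averaging only across interior faces and retaining boundary values, so that its image lies in $H(\curl)$ without any boundary constraint.

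With this construction, for any $K \in \MTh$ the restriction $(\bm{u}_h - E_u \bm{u}_h)|_K$ is a polynomial of degree $\le m$ whose nodal parameters are, by definition, linear combinations of the differences between the tangential values of $\bm{u}_h$ taken from the elements sharing a face (or edge) with $\overline K$. A standard equivalence-of-norms argument on the reference element, followed by the usual scaling to the physical element, converts these parameter-wise bounds into
\begin{displaymath}
  \| \bm{u}_h - E_u \bm{u}_h \|_{L^2(K)}^2 \le C \sum_{f \subset \partial K} h_f \| \jump{\un \times \bm{u}_h} \|_{L^2(f)}^2,
\end{displaymath}
and the inverse inequality applied to the polynomial $\bm{u}_h - E_u \bm{u}_h$ yields the matching curl bound with a factor $h_f^{-1}$. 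Summing over all $K$, using shape-regularity so that each face is counted only boundedly often, and noting that the jump term appearing in $\unorm{\bm{u}_h - E_u \bm{u}_h}$ is exactly $\jump{\un \times \bm{u}_h}$ (since $E_u \bm{u}_h$ is conforming), one obtains both inequalities in \eqref{eq_projectionerror1}. The proof of \eqref{eq_projectionerror2} proceeds identically, except that no boundary degrees of freedom need be zeroed out, so only jumps on $\MFh^i$ enter.

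The main obstacle is the careful definition of $E_u$ in three dimensions, where the natural degrees of freedom for the $H(\curl)$-conforming subspace of $\bmr{V}_h^m$ involve moments of the tangential component along \emph{edges}, not just faces, while the right-hand sides of \eqref{eq_projectionerror1}--\eqref{eq_projectionerror2} only contain face jumps $\jump{\un \times \bm{u}_h}$ and $\jump{\un \times \bm{p}_h}$. One therefore needs the by-now standard ``edge-to-face'' local estimate showing that, on a shape-regular simplex, the tangential trace of a polynomial along an interior edge is controlled in $L^2$ by the tangential traces on any two faces meeting that edge; this turns edge jump contributions into face jump contributions with constants depending only on the polynomial degree and the shape-regularity constant. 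The remaining ingredients are routine scaling arguments and inverse inequalities for polynomials on shape-regular simplices.
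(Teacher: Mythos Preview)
Your proposal is correct and follows essentially the same route as the paper: both construct the conforming approximant by averaging N\'ed\'elec (second-kind) degrees of freedom in the spirit of Houston--Sch\"otzau--Wihler and Karakashian--Pascal, zero out the boundary dofs for $\bm{v}_h$ only, and convert edge-moment discrepancies into face-jump contributions via equivalence of norms and scaling. One small imprecision worth fixing: your displayed elementwise bound should sum over all faces in the edge patch of $K$ (i.e., over $f \in \mc{F}(e)$ for each $e \in \mc{E}(K)$), not merely over $f \subset \partial K$, since in three dimensions an edge can be shared by elements that are not face-neighbors of $K$; the paper handles this with a telescoping (chain) argument through the elements of $N(e)$, which is precisely what your ``edge-to-face'' remark is gesturing at.
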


\begin{proof}
  We prove this lemma by using the similar techniques as those in
  \cite{Houston2005interior, Karakashian2007convergence,
  li2019nondivergence}. In two dimensions, $\bm{p}_h$ is a
  scalar-valued piecewise polynomial function. For scalar-valued
  functions, the space $H(\curl)$ is equal to the space $H^1(\Omega)$.
  By \cite[Theorem 2.1]{Karakashian2007convergence}, there exists a
  piecewise polynomial $\bm{w}_h \in \bm{\Sigma}_h^m \cap H^1(\Omega)$
  satisfying the estimate \eqref{eq_projectionerror2}. For the
  vector-valued function $\bm{u}$, we will use \Ned's elements of
  the second type in two dimensions to prove the result and the
  estimate \eqref{eq_projectionerror1}. In three dimensions, $\bm{u}_h$
  and $\bm{p}_h$ are both vector-valued piecewise polynomials and we
  will apply 3D \Ned's elements of the second type to prove the two
  results. We primarily deduce that for the three-dimensional case and
  it is almost trivial to extend the proof to 2D for vector-valued
  functions.

  We first give some properties about \Ned's edge elements  of the
  second type, which was introduced by \Ned \cite{Nedelec1986new}. For
  a bounded domain $D$, we define the polynomial space $\mb{D}_{l}(D)$
  as $\mb{D}_l(D) = \mb{P}_{k -1}(D)^d \oplus \wt{\mb{P}}_{k - 1}(D)
  \bm{x}$, where $\wt{\mb{P}}_{k - 1}(D)$ is the space of homogeneous
  polynomials of degree $k - 1$.  For a tetrahedral element $K$ and a
  polynomial $\bm{v} \in \mb{P}(K)^d$, three types of moments (degrees
  of freedom) of the \Ned's elements associated with edges of $K$,
  faces of $K$ and $K$ itself are defined as follows,
  \begin{displaymath}
    \begin{aligned}
      M_K^e(\bm{t}) &= \left\{ \int_e (\bm{t} \cdot \bm{\tau}_e) q
      \d{s}, \quad \forall q \in \mb{P}_{m}(e) \right\}, \quad
      \text{for any edge } e \in \partial K,  \\
      M_K^f (\bm{t}) &= \left\{ \frac{1}{|f|} \int_f \bm{t} \cdot
      \bm{q} \d{s}, \quad \forall \bm{q} \in \mb{D}_{m - 1}(f)
      \right\}, \quad \text{for any face } f \in \partial K, \\
      M_K^b(\bm{t}) &= \left\{ \int_K \bm{t} \cdot \bm{q} \d{x}, \quad
      \forall \bm{q} \in \mb{D}_{m - 2}(K) \right\}, \\
    \end{aligned}
  \end{displaymath}
  where $\bm{\tau}_e $ denotes the unit vector along the edge. 
  Further, for any polynomial $\bm{t} \in \mb{P}_m(K)^d$, we define
  $t_{K, e}^i \in M_K^e(\bm{t})$, $t_{K, f}^i \in M_K^f(\bm{t})$ and
  $t_{K, b}^i \in M_K^b(\bm{t})$ as the corresponding moments of
  $\bm{t}$. We let $\left\{ \bm{\phi}_{K, e}^i \right\}$, $\left\{
  \bm{\phi}_{K, f}^i \right\}$, $\left\{ \bm{\phi}_{K, b}^i \right\}$
  are the Lagrange bases of the space $\mb{P}_m(K)^d$ with respect to the
  moments, respectively. Then, any polynomial $\bm{v} \in
  \mb{P}_m(K)^d$ can be uniquely expressed as 
  \[
    \bm{v} = \sum_{e \in \mc{E}(K)} \sum_{i = 1}^{N_e} v_{K, e}^i
    \bm{\phi}_{K, e}^i + \sum_{f \in \mc{F}(K)} \sum_{i = 1}^{N_f}
    v_{K, f}^i \bm{\phi}_{K, f}^i + \sum_{i = 1}^{N_b} v_{K, b}^i
    \bm{\phi}_{K, b}^i,
  \]
  where $\mc{E}(K)$ and $\mc{F}(K)$ are the sets of edges and faces of
  the element $K$, respectively. 
  
  Then we state the following estimates. For an element $K$ and any
  polynomial $\bm{v} \in \mb{P}_m(K)^d$, there exists a constant $C$ such
  that 
  \begin{equation}
    \begin{aligned}
       h_K^{-2} \| \bm{v} \|_{L^2(K)}^2 &+ \| \nabla \times \bm{v}
      \|_{L^2(K)}^2 \leq \\ C h_K^{-1} &\left( \sum_{e \in \mc{E}(K)}
      \sum_{i = 1}^{N_e} (v_{K, e}^i)^2 + \sum_{f \in \mc{F}(K)}
      \sum_{i = 1}^{N_f} (v_{K, f}^i)^2 + \sum_{i = 1}^{N_b} (v_{K,
        b}^i)^2 \right).
  \end{aligned}
    \label{eq_Nedinequality1}
  \end{equation}
  For an interior face $f \in \MFh^i$ shared by $K_1$ and $K_2$. For
  any polynomial $\bm{v}_1 \in \mb{P}_m(K_1)^d$ and $\bm{v}_2 \in
  \mb{P}_m(K_2)^d$, there exists a constant $C$ such that 
  \begin{equation}
    \sum_{i = 1}^{N_f} \left( {v}_{K_1, f}^i - {v}_{K_2, f}^i
    \right)^2 + \sum_{e \in \mc{E}(f)} \sum_{i = 1}^{N_e} \left(
      {v}_{K_1, e}^i - v_{K_2, e}^i \right)^2 \leq C
    \int_f | \un_f \times (\bm{v}_1 - \bm{v}_2) |^2 \d{\bm{s}}, 
    \label{eq_Nedinequality2}
  \end{equation}
  where $\mc{E}(f)$ is the set of edges belonging to the face $f$ and
  $\un_f$ is the unit outward normal on $f$. For a boundary face
  $f$, we let $K$ be the element such that $f \in \partial K$. Then,
  there exists a constant $C$ such that
  \begin{equation}
    \sum_{i = 1}^{N_f} \left( {v}_{K, f}^i\right)^2 + \sum_{e
      \in \mc{E}(f)} \sum_{i = 1}^{N_e} \left( {v}_{K, e}^i
    \right)^2 \leq C \int_f | \un_f \times \bm{v} |^2
    \d{\bm{s}}.
    \label{eq_Nedinequality3}
  \end{equation}
  The estimates  \eqref{eq_Nedinequality1}, \eqref{eq_Nedinequality2}
  and \eqref{eq_Nedinequality3} are obtained from the equivalence of
  norms over finite dimensional spaces and the scaling argument. 
  We refer to \cite{Houston2005interior, Monk2003finite} for the
  details of their proofs.
  
  Then we construct two new piecewise polynomials $\bm{w}_h \in
  \bmr{\Sigma}_h^m \cap H(\curl)$ and $\bm{v}_h \in \bmr{V}_h^m \cap
  H_0(\curl)$ satisfying the estimates \eqref{eq_projectionerror2} and
  \eqref{eq_projectionerror1}, respectively. To construct $\bm{w}_h$,
  for the element $K$, we define its moments $\left\{ w_{K, e}^i
  \right\} $, $ \left\{ w_{K, f}^i \right\} $ and $\left\{ w_{K, b}^i
  \right\} $ as follows,
  \begin{equation}
    w_{K, e}^i = \sum_{\wt{K} \in N(e)} \frac{1}{|N(e)|} p_{\wt{K},
    e}^i, \quad \forall e \in \MEh, \quad 1 \leq i \leq N_e,
    \label{eq_vKei}
  \end{equation}
  and
  \begin{equation}
    w_{K, f}^i = \sum_{\wt{K} \in N(f)} \frac{1}{|N(f)|} p_{\wt{K},
    f}^i, \quad \forall f \in \MFh, \quad 1 \leq i \leq N_f,
    \label{eq_vKfi}
  \end{equation}
  and 
  \begin{displaymath}
    w_{K, b}^i = p_{K, b}^i, \quad 1 \leq i \leq N_b.
  \end{displaymath}
  Here we let $N(e)$, $N(f)$ be the sets of elements that contain the
  edge $e$ and the face $f$, respectively, and we denote their
  cardinalities as $|N(e)|$ and $|N(f)|$. Clearly, the above moments
  will yield a piecewise polynomial $\bm{w}_h \in \bmr{\Sigma}_h^m
  \cap H(\curl)$. Then we focus on the error between $\bm{w}_h$ and
  $\bm{p}_h$. From \eqref{eq_Nedinequality1}, we deduce that 
  \begin{equation}
    \begin{aligned}
      & h_K^{\alpha - 1} \| \bm{p}_h - \bm{w}_h  \|_{L^2(K)}^2 + 
      h_K^{\alpha + 1} \| \nabla
      \times (\bm{p}_h - \bm{w}_h) \|_{L^2(K)}^2 \leq \\
      & C h_K^{\alpha} \left( \sum_{e \in \mc{E}(K)} \sum_{i =
      1}^{N_e} ({p}_{K, e}^i - {w}_{K, e}^i)^2 + \sum_{f \in
      \mc{F}(K)} \sum_{i = 1}^{N_f} (p_{K, f}^i - w_{K, f}^i)^2
      \right), \quad \alpha = \pm 1, \\
    \end{aligned}
    \label{eq_erroruhvhK}
  \end{equation}
  on the element $K$. By \eqref{eq_vKei}, we only need to consider the
  error for the edge $e$ which satisfies $|N(e)| \geq 2$. For such an
  edge $e$, we let $e$ be shared by a sequence of elements $ \left\{
  K_{e, 1}, K_{e, 2}, \ldots, K_{e, |N(e)|} \right\}$ with $K_{e, 1} =
  K$ and $K_{e, i}, K_{e, i + 1}$ are two neighbouring elements. Then
  the edge $e$ will be shared by $|N(e)| + 1$ faces, which are
  $\left\{ f_{e,1},f_{e,2},  f_{e, |N(e)| + 1} \right\}$ where $f_{e,
  i},f_{e,i+1}$ are the adjacent faces of the element $K_{e, i}$, and
  we further have that $f_{e, 2}, \ldots, f_{e, |N(e)|} \in \MFh^i$.
  Then from \eqref{eq_Nedinequality1} and the mesh regularity, we
  deduce that 
  \begin{equation}
    \begin{aligned}
      \sum_{i = 1}^{N_e} h_K^{\alpha} \left( p_{K, e}^i - w_{K, e}^i
      \right)^2 & \leq C \sum_{j = 2}^{|N(e)|} \sum_{i = 1}^{N_e}
      h_K^\alpha \left( p_{K_{e,1}, e}^i - w_{K_{e, j}, e}^i \right)^2
      \\ 
      & \leq C \sum_{j = 1}^{|N(e)| - 1} \sum_{i = 1}^{N_e}
      h_K^{\alpha} \left( p_{K_{e, j}, e}^i - w_{K_{e, j + 1}, e}^i
      \right)^2 \\
      & \leq C \sum_{j = 2}^{|N(e)|} \int_{f_{e, j}} h_{f_{e,
      j}}^{\alpha} \| \jump{ \un \times \bm{p}_h} \|^2 \d{s} \\ 
      &\leq C \sum_{f \in \mc{F}(e) \cap \MFh^i} h_f^{\alpha} \|
      \jump{\un \times \bm{p}_h} \|_{L^2(f)}^2,  \\
    \end{aligned}
    \label{eq_erroruiviK}
  \end{equation}
  where $\mc{F}(e)$ are the set of faces that contain the edge $e$.

  By \eqref{eq_vKfi}, we consider the errors on interior faces, and
  from \eqref{eq_Nedinequality2} we obtain that
  \begin{equation}
    \begin{aligned}
      \sum_{i = 1}^{N_f} h_K^\alpha \left( p_{K, f}^i - w_{K, f}^i
      \right)^2 & \leq C \sum_{K' \in N(f)} \sum_{i = 1}^{N_f}
      h_K^{\alpha} \left( p_{K, f}^i - p_{K', f}^i \right)^2 \\ 
      & \leq C h_f^{\alpha} \| \jump{\un \times \bm{p}_h}
      \|_{L^2(f)}^2, \\
    \end{aligned}
    \label{eq_errorufvfK}
  \end{equation}
  for the face $f \in \MFh^i$.  Combining the estimates
  \eqref{eq_erroruhvhK}, \eqref{eq_erroruiviK} and
  \eqref{eq_errorufvfK}, and summing over all elements arrives at the
  estimate \eqref{eq_projectionerror2}.

  The construction of the piecewise polynomial $\bm{v}_h$ is similar.
  We define its corresponding moments $\left\{ v_{K, e}^i \right\}$,
  $\left\{ v_{K, f}^i \right\}$ and $\left\{ v_{K, b}^i \right\}$ as 
  \[
    v_{K, e}^i = \begin{cases}
      \sum_{\wt{K} \in N(e)} \frac{1}{|N(e)|} u_{\wt{K}, e}^i, & \quad
      \forall e \in \MEh^i, \\
      0, & \quad \forall e \in \MEh^b, \\
    \end{cases}, \quad 1 \leq i \leq N_e,
    \label{eq_wKei}
  \]
  and
  \[
    v_{K, f}^i = \begin{cases}
      \sum_{\wt{K} \in N(f)} \frac{1}{|N(f)|}u_{\wt{K}, f}^i, \quad
      &\forall f \in \MFh^i, \\
      0, \quad & \forall f \in \MFh^b, \\
    \end{cases}, \quad 1 \leq i \leq N_f,
    \label{eq_wKfi}
  \]
  and
  \[
    v_{K, b}^i = u_{K, b}^i, \quad 1 \leq i \leq N_b.
  \]
  The above moments obviously imply that $\bm{v}_h \in H_0(\curl)$.
  We note that the moments of $\bm{v}_h$ and $\bm{w}_h$ are only
  different on boundary edges and faces. Hence the estimates
  \eqref{eq_errorufvfK} and \eqref{eq_erroruiviK} also hold for
  $\bm{v}_h$ on interior edges and faces. On the element $K$, we have
  that
  \[
    \sum_{i = 1}^{N_e} h_K^\alpha \left( u_{K, e}^i - v_{K, e}^i
    \right) \leq C \sum_{f \in \mc{F}(e)} h_f^{\alpha} \| \jump{\un
    \times \bm{u}_h} \|_{L^2(f)}^2,
  \]
  for an interior edge $e \in \mc{E}(K)$, and 
  \[
    \sum_{i = 1}^{N_f} h_K^\alpha \left( u_{K, f}^i - v_{K, f}^i
    \right)^2 \leq C h_f^{\alpha} \| \jump{\un \times \bm{u}_h}
    \|_{L^2(f)}^2,
  \]
  for an interior face $f \in \mc{F}(K)$. For boundary edges and
  faces, we directly apply the estimate \eqref{eq_Nedinequality3} to
  obtain the analogous inequalities, which bring us that
  \[
    \begin{aligned}
      h_K^{\alpha - 1} \| \bm{u}_h & - \bm{v}_h \|_{L^2(K)}^2  +
      h_K^{\alpha + 1} \| \nabla \times (\bm{u}_h - \bm{v}_h)
      \|_{L^2(K)}^2 \\
      & \leq C \left( \sum_{e \in \mc{E}(K)} \sum_{f \in
      \mc{F}(e)} h_f^{\alpha} \| \jump{\un \times \bm{u}_h}
      \|_{L^2(f)}^2 + \sum_{f \in \mc{F}(K)} h_f^\alpha \| \jump{\un
      \times \bm{u}_h } \|_{L^2(f)}^2 \right), \quad \alpha = \pm 1.
    \end{aligned}
  \]
  We finally arrive at the estimate \eqref{eq_projectionerror1} by
  summing over all elements. This completes the proof.
\end{proof}

\begin{lemma}
  There exists a constant $C$ such that
  \begin{equation}
    \unorm{ \bm{u}} + \pnorm{\bm{p}} \leq C \left( \| \nabla \times
    \bm{u} - k \bm{p} \|_{L^2(\Omega)} + \| \nabla \times \bm{p} -
    k \bm{u} \|_{L^2(\Omega)}  \right),
    \label{eq_Maxwellinequality}
  \end{equation}
  for any $\bm{u} \in \bmr{V}_h^m \cap H_0(\curl)$ and any $\bm{p} \in
  \bmr{\Sigma}_h^m \cap H(\curl)$.
  \label{le_Maxwellinequality}
\end{lemma}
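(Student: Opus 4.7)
My plan is to reduce the inequality to the classical well-posedness of the second-order time-harmonic Maxwell equation with $H_0(\curl)$ boundary condition, using the assumption that $k$ is not a Maxwell eigenvalue.

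First I would introduce the shorthand
\begin{displaymath}
  \bm{F} := \nabla \times \bm{p} - k\bm{u}, \qquad
  \bm{G} := \nabla \times \bm{u} - k\bm{p},
\end{displaymath}
both of which lie in $L^2(\Omega)^{d}$. From $k\bm{p} = \nabla \times \bm{u} - \bm{G}$ and $\nabla \times \bm{p} = k\bm{u} + \bm{F}$ I would eliminate $\bm{p}$ to obtain, in the distributional sense,
\begin{displaymath}
  \nabla \times \nabla \times \bm{u} - k^2 \bm{u} = k \bm{F} + \nabla \times \bm{G}.
\end{displaymath}
Note the crucial fact: since $\bm{u} \in H_0(\curl)$ and $\bm{p} \in H(\curl)$, the jump contributions in $\unorm{\cdot}$ and $\pnorm{\cdot}$ vanish, so it suffices to control the piecewise $L^2$ and curl-norms.

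Next I would pass to the weak form: for every $\bm{\phi} \in H_0(\curl)$, integration by parts (legitimate because $\bm{\phi}$ has vanishing tangential trace) gives
\begin{displaymath}
  \int_\Omega \nabla \times \bm{u} \cdot \nabla \times \bm{\phi} \,d\bm{x}
  - k^2 \int_\Omega \bm{u} \cdot \bm{\phi} \,d\bm{x}
  = k \int_\Omega \bm{F} \cdot \bm{\phi}\, d\bm{x} + \int_\Omega \bm{G} \cdot \nabla \times \bm{\phi}\, d\bm{x}.
\end{displaymath}
Since $k$ is not an eigenvalue of the Maxwell system, the sesquilinear form on the left is an isomorphism from $H_0(\curl)$ onto its dual, so the standard \emph{a priori} estimate yields
\begin{displaymath}
  \|\bm{u}\|_{L^2(\Omega)} + \|\nabla \times \bm{u}\|_{L^2(\Omega)}
  \leq C\bigl(\|\bm{F}\|_{L^2(\Omega)} + \|\bm{G}\|_{L^2(\Omega)}\bigr),
\end{displaymath}
with $C$ depending on $k$ and $\Omega$. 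This gives the desired bound on $\unorm{\bm{u}}$.

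Finally, I would recover the bound on $\pnorm{\bm{p}}$ algebraically from the defining identities: $k\bm{p} = \nabla \times \bm{u} - \bm{G}$ gives $\|\bm{p}\|_{L^2} \leq C(\|\nabla \times \bm{u}\|_{L^2} + \|\bm{G}\|_{L^2})$, and $\nabla \times \bm{p} = k\bm{u} + \bm{F}$ gives $\|\nabla \times \bm{p}\|_{L^2} \leq C(\|\bm{u}\|_{L^2} + \|\bm{F}\|_{L^2})$. Combining these with the previous estimate completes the proof. The main obstacle is simply invoking the correct well-posedness theorem; the assumption that $k$ is not a Maxwell eigenvalue is essential, since otherwise one could take $\bm{u}$ to be an eigenmode and $\bm{p} = \frac{1}{k}\nabla \times \bm{u}$, producing $\bm{F} = \bm{G} = 0$ with a nontrivial left-hand side.
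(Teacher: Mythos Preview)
Your proof is correct and follows essentially the same route as the paper: set the two residuals, derive the weak second-order Maxwell identity by testing against $\bm{\phi}\in H_0(\curl)$, invoke the inf-sup/well-posedness estimate (the paper cites \cite[Theorem~5.2]{Hiptmair2002finite}) to bound $\|\bm{u}\|_{H(\curl)}$, and then recover $\|\bm{p}\|_{H(\curl)}$ algebraically from the defining relations. The only cosmetic difference is that the paper obtains the weak identity by testing the two first-order equations separately and combining, whereas you pass through the distributional second-order equation first.
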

\begin{proof}
  We let
  \begin{displaymath}
    \nabla \times \bm{u} - k\bm{p} = \bm{f}_1, \quad \nabla \times
    \bm{p} - k \bm{u} = \bm{f}_2.
  \end{displaymath}
  For any $\bm{\psi} \in H_0(\curl)$, we have 
  \begin{displaymath}
    \left( \nabla \times \bm{u}, \nabla \times \bm{\psi}
    \right)_{L^2(\Omega)} - k \left( \bm{p}, \nabla \times \bm{\psi}
    \right)_{L^2(\Omega)} = (\bm{f}_1, \nabla \times
    \bm{\psi})_{L^2(\Omega)},
  \end{displaymath}
  and
  \begin{displaymath}
    (\nabla \times \bm{p}, \bm{\psi})_{L^2(\Omega)} - k \left( \bm{u},
    \bm{\psi} \right) = \left( \bm{f}_2, \bm{\psi}
    \right)_{L^2(\Omega)}.
  \end{displaymath}
  Using the integration by parts to obtain that
  \begin{displaymath}
    \begin{aligned}
      (\nabla \times \bm{u}, \nabla \times \bm{\psi})_{L^2(\Omega)} -
      k^2 (\bm{u}, \bm{\psi})_{L^2(\Omega)} &= (\bm{f}_1, \nabla
      \times \bm{\psi})_{L^2(\Omega)} + k (\bm{f}_2,
      \bm{\psi})_{L^2(\Omega)}. \\
    \end{aligned}
  \end{displaymath}
  By \cite[Theorem 5.2]{Hiptmair2002finite}, we derive that
  \begin{displaymath}
    \begin{aligned}
      \| \bm{u} \|_{H(\curl)} &\leq C \sup_{\bm{\psi} \in H_0(\curl)}
      \frac{ (\nabla \times \bm{u}, \nabla  \times
      \bm{\psi})_{L^2(\Omega)} - k^2 (\bm{u},
      \bm{\psi})_{L^2(\Omega)}}{\| \bm{\psi} \|_{H(\curl)} } \\
      & \leq C \sup_{\bm{\psi} \in H_0(\curl)} \frac{ (\bm{f}_1,
      \nabla \times \bm{\psi})_{L^2(\Omega)} + k (\bm{f}_2,
      \bm{\psi})_{L^2(\Omega)}}{\| \bm{\psi} \|_{H(\curl)} } \\
      & \leq C \left(  \|\bm{f}_1 \|_{L^2(\Omega)} + \| \bm{f}_2
      \|_{L^2(\Omega)}  \right). \\
    \end{aligned}
  \end{displaymath}
  Since $\bm{u} \in H_0(\curl)$, we have that $ \| \bm{u}
  \|_{H(\curl)} = \unorm{\bm{u}}$. Further, we deduce that
  \begin{displaymath}
    \begin{aligned}
      \pnorm{\bm{p}} & = \| \bm{p} \|_{H(\curl)} \leq C \left( \|\bm{p}
      \|_{L^2(\Omega)} + \|\nabla \times \bm{p} \|_{L^2(\Omega)}
      \right) \\
      & \leq C \left( \| \bm{f}_1 \|_{L^2(\Omega)}
      + \| \nabla \times \bm{u} \|_{L^2(\Omega)} + \| \bm{f}_2
      \|_{L^2(\Omega)} + \|\bm{u} \|_{L^2(\Omega)} \right) \\
      & \leq C \left( \|\bm{f}_1 \|_{L^2(\Omega)} + \| \bm{f}_2
      \|_{L^2(\Omega)} \right), \\
    \end{aligned}
  \end{displaymath}
  which gives us the estimate \eqref{eq_Maxwellinequality} and
  completes the proof.
\end{proof}
Now we are ready to state that the bilinear form $a_h(\cdot; \cdot)$
is coercive with respect to the energy norms.
\begin{lemma}
  Let the bilinear form $a_h(\cdot; \cdot)$ be defined as
  \eqref{eq_bilinearform} with any positive $\mu$, there exists a
  constant $C$ such that  
  \begin{equation}
    a_h(\bm{u}_h, \bm{p}_h; \bm{u}_h, \bm{p}_h) \geq C \enorm{
    (\bm{u}_h, \bm{p}_h)}^2,
    \label{eq_coercivity}
  \end{equation}
  for any $(\bm{u}_h, \bm{p}_h) \in \bmr{V}_h^m \times \bmr{\Sigma}_h^m$.
  \label{le_coercivity}
\end{lemma}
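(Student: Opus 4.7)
The plan is to combine Lemma \ref{le_projection} with Lemma \ref{le_Maxwellinequality} via a conforming/nonconforming decomposition. Observe first that by inspection of \eqref{eq_bilinearform},
\begin{displaymath}
  a_h(\bm{u}_h, \bm{p}_h; \bm{u}_h, \bm{p}_h)
  = \sum_{K \in \MTh} \left( \| \nabla \times \bm{p}_h - k \bm{u}_h \|_{L^2(K)}^2 + \| \nabla \times \bm{u}_h - k \bm{p}_h \|_{L^2(K)}^2 \right)
  + \mu J_h^{\mathrm{jmp}}(\bm{u}_h, \bm{p}_h),
\end{displaymath}
where $J_h^{\mathrm{jmp}}$ collects the $\sum_f \tfrac{1}{h_f}\|\jump{\un\times\cdot}\|_{L^2(f)}^2$ contributions from $\bm{u}_h$ and $\bm{p}_h$. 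So it suffices to bound each ingredient of $\enorm{(\bm{u}_h,\bm{p}_h)}^2$ by these quantities.

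Next, apply Lemma \ref{le_projection} to produce a conforming companion pair $\bm{v}_h \in \bmr{V}_h^m \cap H_0(\curl)$ and $\bm{w}_h \in \bmr{\Sigma}_h^m \cap H(\curl)$ with the stated bounds \eqref{eq_projectionerror1} and \eqref{eq_projectionerror2}. These say that both $\unorm{\bm{u}_h-\bm{v}_h}^2$ and $\pnorm{\bm{p}_h-\bm{w}_h}^2$, as well as $\|\bm{u}_h-\bm{v}_h\|_{L^2}^2$ and $\|\bm{p}_h-\bm{w}_h\|_{L^2}^2$ (using $h_f \le h \le \mathrm{diam}(\Omega)$ to absorb the $h_f$ weights into $1/h_f$), are each controlled by $J_h^{\mathrm{jmp}}$, up to the multiplicative factor $\mu^{-1}$ once we divide by $\mu$.

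The heart of the argument is to feed the conforming pair $(\bm{v}_h,\bm{w}_h)$ into Lemma \ref{le_Maxwellinequality}. For the two residuals that appear on its right-hand side, write
\begin{displaymath}
  \nabla\times\bm{v}_h - k\bm{w}_h = (\nabla\times\bm{u}_h - k\bm{p}_h) - \nabla_h\times(\bm{u}_h-\bm{v}_h) + k(\bm{p}_h-\bm{w}_h),
\end{displaymath}
and symmetrically for $\nabla\times\bm{w}_h - k\bm{v}_h$. The leading term on the right is exactly one of the residuals in $a_h(\bm{u}_h,\bm{p}_h;\bm{u}_h,\bm{p}_h)$; the remaining two correction terms are $L^2$-bounded by $\unorm{\bm{u}_h-\bm{v}_h}$ and $\|\bm{p}_h-\bm{w}_h\|_{L^2}$, hence by $J_h^{\mathrm{jmp}}$ through Lemma \ref{le_projection}. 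The triangle inequality therefore yields
\begin{displaymath}
  \unorm{\bm{v}_h}^2 + \pnorm{\bm{w}_h}^2 \le C\, a_h(\bm{u}_h,\bm{p}_h;\bm{u}_h,\bm{p}_h),
\end{displaymath}
with $C$ absorbing the $\mu^{-1}$ factor.

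Finally, a triangle inequality in the energy norm,
\begin{displaymath}
  \enorm{(\bm{u}_h,\bm{p}_h)}^2 \le 2\,\enorm{(\bm{v}_h,\bm{w}_h)}^2 + 2\,\enorm{(\bm{u}_h-\bm{v}_h,\bm{p}_h-\bm{w}_h)}^2,
\end{displaymath}
closes the argument: the first summand is bounded above by Lemma \ref{le_Maxwellinequality} plus the manipulations just described, and the second is bounded directly by Lemma \ref{le_projection}. I expect the main bookkeeping obstacle is handling the broken-curl term $\nabla_h\times(\bm{u}_h-\bm{v}_h)$ correctly and verifying that the $h_f$-weighted projection errors recombine with the $1/h_f$-weighted jump contributions appearing in $\enorm{\cdot}$ and in $a_h(\cdot;\cdot)$; once the bound $h_f \lesssim \mathrm{diam}(\Omega)^2/h_f$ is invoked, all constants depend only on $\Omega$, $k$ and the shape regularity, independently of $h$.
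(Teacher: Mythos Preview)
Your proposal is correct and follows essentially the same route as the paper's proof: both use Lemma~\ref{le_projection} to produce a conforming companion pair, apply Lemma~\ref{le_Maxwellinequality} to that pair, relate the conforming residuals back to the discrete residuals in $a_h$ via the triangle inequality and the projection bounds, and finish with a triangle inequality in the energy norm. The only differences are cosmetic (ordering of the steps and your explicit tracking of the $\mu$-dependence); no new idea is needed beyond what you have outlined.
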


\begin{proof}
  Clearly, we have that 
  \begin{displaymath}
    \begin{aligned}
      a_h(\bm{u}_h, \bm{p}_h; \bm{u}_h,& \bm{p}_h) =  \sum_{K \in
      \MTh} \left( \| \nabla \times \bm{p}_h - k \bm{u}_h
      \|_{L^2(K)}^2 + \| \nabla \times \bm{u}_h - k \bm{p}_h
      \|_{L^2(K)}^2 \right) \\
      + & \sum_{f \in \MFh^i} \frac{\mu}{h_f} \left( \| \jump{\un
      \times \bm{u}_h }\|_{L^2(f)}^2 + \| \jump{\un \times \bm{p}_h}
      \|_{L^2(f)}^2 \right) + \sum_{f \in \MFh^b} \frac{\mu}{h_f} \|
      \un \times \bm{u}_h \|_{L^2(f)}^2. \\
    \end{aligned}
  \end{displaymath}

  By Lemma \ref{le_projection}, for $\bm{u}_h$ and $\bm{p}_h$, there
  exists a polynomial $\bm{v}_h \in \bmr{V}_h^m \cap H_0(\curl)$ such
  that 
  \begin{displaymath}
    \begin{aligned}
      \unorm{\bm{u}_h - \bm{v}_h }^2 \leq C \sum_{f \in \MFh} 
      \frac{1}{h_f} \| \jump{\un \times \bm{u}_h} \|^2_{L^2(f)} 
      \leq C  a_h(\bm{u}_h, \bm{p}_h; \bm{u}_h, \bm{p}_h) , \\
    \end{aligned}
  \end{displaymath}
  and there exists a polynomial $\bm{q}_h \in \bmr{\Sigma}_h^m \cap
  H(\curl)$ such that 
  \begin{displaymath}
    \pnorm{\bm{p}_h - \bm{q}_h }^2 \leq C \sum_{f \in \MFh^i}
    \frac{1}{h_f} \| \jump{\un \times \bm{p}_h} \|_{L^2(f)}^2 \leq
    C a_h(\bm{u}_h, \bm{p}_h; \bm{u}_h, \bm{p}_h).  \\
  \end{displaymath}
  Hence, 
  \begin{displaymath}
    \begin{aligned}
      \enorm{ (\bm{u}_h, \bm{p}_h)}^2 & \leq C \left( \enorm{
      (\bm{u}_h - \bm{v}_h, \bm{p}_h - \bm{q}_h) }^2 + \enorm{
      (\bm{v}_h, \bm{q}_h)}^2 \right) \\
      & \leq C \left(  a_h(\bm{u}_h, \bm{p}_h; \bm{u}_h, \bm{p}_h) +
      \enorm{ (\bm{v}_h, \bm{q}_h)}^2 \right). 
    \end{aligned}
  \end{displaymath}
  By Lemma \ref{le_Maxwellinequality}, we get that
  \begin{displaymath}
    \begin{aligned}
      \enorm{ (\bm{v}_h, \bm{q}_h)}^2 & \leq \left( \unorm{\bm{v}_h} 
      + \pnorm{q_h} \right)^2 \\
      & \leq C \left(  \| \nabla \times \bm{v}_h - \bm{q}_h
      \|_{L^2(\Omega)} + \| \nabla \times \bm{q}_h - \bm{v}_h
      \|_{L^2(\Omega)} \right)^2. \\
    \end{aligned}
  \end{displaymath}
  We apply the triangle inequality to derive that
  \begin{displaymath}
    \begin{aligned}
      \|\nabla \times \bm{v}_h - \bm{q}_h \|_{L^2(\Omega)}^2 & \leq C
      \left( \| \nabla \times \bm{u}_h - \bm{p}_h \|_{L^2(\MTh)}^2 +
      \| \nabla \times (\bm{u}_h - \bm{v}_h) \|_{L^2(\MTh)}^2 +
      \|\bm{p}_h - \bm{q}_h \|_{L^2(\Omega)}^2 \right)\\
      & \leq  C \left( \| \nabla \times \bm{u}_h - \bm{p}_h
      \|_{L^2(\MTh)}^2 + \unorm{\bm{u}_h - \bm{v}_h}^2 + 
      \pnorm{\bm{p}_h - \bm{q}_h}^2 \right) \\
      & \leq C a_h(\bm{u}_h, \bm{p}_h; \bm{u}_h, \bm{p}_h).  \\
    \end{aligned}
  \end{displaymath}
  Similarly, we have that 
  \begin{displaymath}
    \| \nabla \times \bm{q}_h - \bm{v}_h \|_{L^2(\Omega)}^2 \leq C
    a_h(\bm{u}_h, \bm{p}_h; \bm{u}_h, \bm{p}_h).
  \end{displaymath}
  Combining all inequalities above, we arrive at 
  \begin{displaymath}
    a_h(\bm{u}_h, \bm{p}_h; \bm{u}_h, \bm{p}_h) \geq C \enorm{
    (\bm{u}_h, \bm{p}_h)}^2,
  \end{displaymath}
  which gives the estimate \eqref{eq_coercivity} and completes the
  proof.
\end{proof}

\begin{remark}
  We note that here we have attained the coercivity result of the
  bilinear form $a_h(\cdot; \cdot)$.  
  This property allows us to derive the error estimate from
  the Lax-Milgram framework instead of using the
  G$\mathring{\text{a}}$rding-type inequality. In addition, the
  estimate \eqref{eq_coercivity} holds true unconditionally without
  any constraint on the mesh size.
\end{remark}

Then we state the Galerkin orthogonality of the bilinear form
$a_h(\cdot; \cdot)$. 
\begin{lemma}
  \label{le_Galerkin}
  Let the bilinear form $a_h(\cdot; \cdot)$ be defined as
  \eqref{eq_bilinearform} with any positive $\mu$. Let $(\bm{u},
  \bm{p}) \in H(\curl) \times H(\curl)$ be the exact solution to
  \eqref{eq_firstMaxwell}, and let $(\bm{u}_h, \bm{p}_h) \in
  \bmr{V}_h^m \times \bmr{\Sigma}_h^m$ be the solution to
  \eqref{eq_bilinear}. Then, the following equation holds true
  \begin{equation}
    a_h(\bm{u} - \bm{u}_h, \bm{p} - \bm{p}_h; \bm{v}_h, \bm{q}_h) = 0,
    \label{eq_Galerkin}
  \end{equation}
  for any $(\bm{v}_h, \bm{q}_h) \in \bmr{V}_h^m \times \bmr{\Sigma}_h^m$.
\end{lemma}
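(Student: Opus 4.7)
The plan is to show directly that the exact solution $(\bm{u}, \bm{p})$ satisfies the same variational identity $a_h(\bm{u}, \bm{p}; \bm{v}_h, \bm{q}_h) = l_h(\bm{v}_h, \bm{q}_h)$ as the discrete solution, then subtract and use the linearity of $a_h$ in its first argument.

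First I would record the two facts that make the boundary/face terms collapse. Because $(\bm{u}, \bm{p}) \in H(\curl) \times H(\curl)$, the tangential traces are single-valued across every interior face, so $\jump{\un \times \bm{u}} = 0$ and $\jump{\un \times \bm{p}} = 0$ on every $f \in \MFh^i$. On the boundary, the Dirichlet condition gives $\un \times \bm{u} = \bm{g}$, which turns the boundary penalty $\frac{\mu}{h_f}\int_f (\un \times \bm{u})\cdot(\un \times \bm{v}_h)\,\mathrm{d}s$ into $\frac{\mu}{h_f}\int_f \bm{g}\cdot(\un \times \bm{v}_h)\,\mathrm{d}s$, matching the boundary contribution of $l_h$.

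Next I would plug $(\bm{u}, \bm{p})$ into the volume terms of \eqref{eq_bilinearform}. By \eqref{eq_firstMaxwell}, on each $K \in \MTh$ we have $\nabla \times \bm{p} - k\bm{u} = \wt{\bm{f}}$ and $\nabla \times \bm{u} - k\bm{p} = \bm{0}$, so the volume part of $a_h(\bm{u}, \bm{p}; \bm{v}_h, \bm{q}_h)$ reduces to
\begin{displaymath}
  \sum_{K \in \MTh} \int_K \wt{\bm{f}} \cdot (\nabla \times \bm{q}_h - k\bm{v}_h)\,\mathrm{d}\bm{x}
  = \sum_{K \in \MTh} \int_K \nabla \times \bm{q}_h \cdot \wt{\bm{f}}\,\mathrm{d}\bm{x} - \sum_{K \in \MTh} \int_K \bm{f} \cdot \bm{v}_h\,\mathrm{d}\bm{x},
\end{displaymath}
using $k\wt{\bm{f}} = \bm{f}$. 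Together with the boundary piece this is precisely $l_h(\bm{v}_h, \bm{q}_h)$.

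Finally I would combine this identity with the discrete equation \eqref{eq_bilinear}, subtract, and use the bilinearity of $a_h(\cdot,\cdot;\cdot,\cdot)$ in its first pair of arguments to obtain \eqref{eq_Galerkin}. There is no real obstacle here; the only thing to be careful about is to invoke the $H(\curl)$ regularity of the exact solution so that the jump and boundary penalty terms in $a_h$ evaluated at $(\bm{u}, \bm{p})$ either vanish or reproduce the data term in $l_h$.
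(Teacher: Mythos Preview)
Your proposal is correct and follows essentially the same approach as the paper: both arguments use the $H(\curl)$ regularity of $(\bm{u},\bm{p})$ to kill the interior jump terms, the boundary condition $\un\times\bm{u}=\bm{g}$ to match the boundary penalty with the data term in $l_h$, and the first-order system \eqref{eq_firstMaxwell} (together with $k\wt{\bm f}=\bm f$) to reduce the volume terms. The only cosmetic difference is that the paper expands $a_h(\bm{u}-\bm{u}_h,\bm{p}-\bm{p}_h;\bm{v}_h,\bm{q}_h)$ directly and rewrites it as $l_h(\bm{v}_h,\bm{q}_h)-a_h(\bm{u}_h,\bm{p}_h;\bm{v}_h,\bm{q}_h)$, whereas you first verify consistency $a_h(\bm{u},\bm{p};\bm{v}_h,\bm{q}_h)=l_h(\bm{v}_h,\bm{q}_h)$ and then subtract.
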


\begin{proof}
  The regularity of the exact solution $(\bm{u}, \bm{p})$ directly
  brings us that
  \begin{displaymath}
    \jump{\un \times \bm{u}} = \bm{0}, \quad \jump{\un \times \bm{p}}
    = \bm{0}, \quad \text{on} \,\, \forall f \in \MFh^i.
  \end{displaymath}
  Hence,
  \begin{equation*}
    \begin{aligned}
      a_h(\bm{u} - \bm{u}_h,& \bm{p} - \bm{p}_h ; \bm{v}_h, \bm{q}_h)
      = \sum_{K \in \MTh} \int_K (\nabla \times (\bm{p} - \bm{p}_h) -
      k(\bm{u} - \bm{u}_h)) \cdot (\nabla \times \bm{q}_h - k
      \bm{v}_h) \d{x} \\
      & + \sum_{K \in \MTh} \int_K (\nabla \times (\bm{u} -
      \bm{u}_h) - k(\bm{p} - \bm{p}_h)) \cdot (\nabla \times \bm{v}_h
      - k\bm{q}_h) \d{x} \\
      &\,\, + \sum_{f \in \MFh^i} \frac{\mu}{h_f} \int_f \jump{\un
      \times \bm{u}_h} \cdot \jump{\un \times \bm{v}_h} d \bm{s} +
      \sum_{f \in \MFh^i} \frac{\mu}{h_f} \int_f \jump{\un \times
      \bm{p}_h} \cdot \jump{\un \times \bm{q}_h} \d{s} \\
      &\,\,+ \sum_{f \in \MFh^b} \frac{\mu}{h_f} \int_f (\un \times
      (\bm{u} - \bm{u}_h)) \cdot (\un \times \bm{v}_h) \d{s} \\
      &= \sum_{K \in \MTh} \int_K \wt{\bm{f}} \cdot (\nabla \times
      \bm{q}_h - k\bm{v}_h) \d{x} + \sum_{f \in \MFh^b}
      \frac{\mu}{h_f} \int_f \bm{g} \cdot (\un \times \bm{v}_h) \d{s}
      - a_h(\bm{u}_h, \bm{p}_h; \bm{v}_h, \bm{q}_h) \\
      &= l_h(\bm{v}_h, \bm{q}_h) - a_h(\bm{u}_h, \bm{p}_h; \bm{v}_h,
      \bm{q}_h) \\  
      &= 0,
    \end{aligned}
  \end{equation*}
  which yields the equation \eqref{eq_Galerkin} and 
  completes the proof.
\end{proof}

Finally, we state the {\it a priori} error estimate of the method
proposed in this section under the energy norm $\enorm{\cdot}$ .
\begin{theorem}
  Let $(\bm{u}, \bm{p})$ be the exact solution to 
  \eqref{eq_firstMaxwell}, which admits the following regularity,
  \begin{equation*}
    \begin{aligned}
      (\bm{u}, \bm{p}) &\in H^{m+1}(\Omega)^d \times
      H^{m+1}(\Omega)^{2d-3}, \\
      (\nabla \times \bm{u}, \nabla \times \bm{p}) &\in
      H^{m+1}(\Omega)^{2d-3} \times H^{m+1}(\Omega)^d,
    \end{aligned}
  \end{equation*}
  and let $(\bm{u}_h,\bm{p}_h) \in \bmr{V}_h^m \times
  \bmr{\Sigma}_h^m$ be the numerical solution to \eqref{eq_bilinear},
  and let the bilinear form $a_h(\cdot; \cdot)$ be defined as
  \eqref{eq_bilinearform} with any positive $\mu$. Then there exists a
  constant $C$ such that
  \begin{equation}
    \begin{aligned}
      &\enorm{(\bm{u} - \bm{u}_h, \bm{p} - \bm{p}_h)} \leq \\C h^m 
      \big( \|\bm{u} \|_{H^{m+1}(\Omega)} + &\| \nabla \times \bm{u}
        \|_{H^{m+1}(\Omega)} + \| \bm{p} \|_{H^{m+1}(\Omega)} + \|
        \nabla \times \bm{p} \|_{H^{m+1}(\Omega)} \big).
    \end{aligned}
    \label{eq_errorestimate}
  \end{equation}
  \label{th_errorestimate}
\end{theorem}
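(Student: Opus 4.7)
The plan is to prove Theorem \ref{th_errorestimate} via the classical Céa-type argument, using the three structural results already established: continuity (Lemma \ref{le_continuity}), coercivity (Lemma \ref{le_coercivity}), and Galerkin orthogonality (Lemma \ref{le_Galerkin}). The only new ingredient needed is a conforming interpolation pair with the right approximation rates in the energy norm.

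First, I would pick interpolants $\bm{u}_I \in \bmr{V}_h^m \cap H_0(\curl)$ and $\bm{p}_I \in \bmr{\Sigma}_h^m \cap H(\curl)$ of the exact solution $(\bm{u}, \bm{p})$, using an $H(\curl)$-conforming projection of N\'ed\'elec type so that the commuting diagram property gives componentwise optimal bounds
\begin{displaymath}
\| \bm{u} - \bm{u}_I \|_{L^2(K)} \leq C h_K^{m+1} \| \bm{u} \|_{H^{m+1}(K)}, \quad \| \nabla \times (\bm{u} - \bm{u}_I) \|_{L^2(K)} \leq C h_K^{m+1} \| \nabla \times \bm{u} \|_{H^{m+1}(K)},
\end{displaymath}
and analogously for $\bm{p}$. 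Since these interpolants are tangentially continuous across interior faces, all interior jump contributions in $\unorm{\bm{u} - \bm{u}_I}$ and $\pnorm{\bm{p} - \bm{p}_I}$ vanish. The boundary face contributions $\frac{1}{h_f}\|\jump{\un\times(\bm{u}-\bm{u}_I)}\|_{L^2(f)}^2$ will be controlled by a scaled trace inequality of the form $\|w\|_{L^2(f)}^2 \leq C(h_K^{-1}\|w\|_{L^2(K)}^2 + h_K \|\nabla w\|_{L^2(K)}^2)$, yielding the expected $h^{2m}$ rate per face. Summing over elements and faces then produces
\begin{displaymath}
\enorm{(\bm{u} - \bm{u}_I, \bm{p} - \bm{p}_I)} \leq C h^m \bigl( \|\bm{u}\|_{H^{m+1}(\Omega)} + \| \nabla \times \bm{u} \|_{H^{m+1}(\Omega)} + \|\bm{p}\|_{H^{m+1}(\Omega)} + \| \nabla \times \bm{p} \|_{H^{m+1}(\Omega)} \bigr).
\end{displaymath}

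Next, setting $\bm{\xi}_u := \bm{u}_I - \bm{u}_h$, $\bm{\xi}_p := \bm{p}_I - \bm{p}_h$, both lying in the discrete space, I would apply coercivity to bound $\enorm{(\bm{\xi}_u, \bm{\xi}_p)}^2 \leq C\, a_h(\bm{\xi}_u, \bm{\xi}_p; \bm{\xi}_u, \bm{\xi}_p)$. Using Galerkin orthogonality in the form $a_h(\bm{u} - \bm{u}_h, \bm{p} - \bm{p}_h; \bm{\xi}_u, \bm{\xi}_p) = 0$, the right-hand side rewrites as $a_h(\bm{u}_I - \bm{u}, \bm{p}_I - \bm{p}; \bm{\xi}_u, \bm{\xi}_p)$. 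Continuity then gives
\begin{displaymath}
\enorm{(\bm{\xi}_u, \bm{\xi}_p)}^2 \leq C \enorm{(\bm{u} - \bm{u}_I, \bm{p} - \bm{p}_I)} \enorm{(\bm{\xi}_u, \bm{\xi}_p)},
\end{displaymath}
so $\enorm{(\bm{\xi}_u, \bm{\xi}_p)} \leq C \enorm{(\bm{u} - \bm{u}_I, \bm{p} - \bm{p}_I)}$. A triangle inequality and the interpolation bound above close the argument.

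The only real obstacle I anticipate is the construction and analysis of the conforming interpolants with the required approximation properties in the composite energy norm $\enorm{\cdot}$; in particular, the boundary jump term needs a careful trace argument, and the separation of $\|\bm{u}\|_{H^{m+1}}$ from $\|\nabla\times\bm{u}\|_{H^{m+1}}$ in the final estimate forces the use of a commuting interpolant (rather than merely componentwise Lagrange interpolation), so one should appeal to the standard properties of N\'ed\'elec interpolants of the second type discussed in the proof of Lemma \ref{le_projection}. Once these interpolation estimates are in hand, the rest of the proof is a short chain: coercivity, orthogonality, continuity, triangle inequality.
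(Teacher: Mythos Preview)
Your approach is essentially identical to the paper's proof: derive C\'ea's inequality from coercivity, Galerkin orthogonality, and continuity, then insert the N\'ed\'elec interpolant and bound the remaining energy-norm terms with the standard interpolation and trace estimates. One small slip: the interpolant $\bm{u}_I$ should be taken in $\bmr{V}_h^m \cap H(\curl)$ rather than $H_0(\curl)$, since $\un\times\bm{u}=\bm{g}$ need not vanish; with the ordinary N\'ed\'elec interpolant the boundary jump $\un\times(\bm{u}-\bm{u}_I)$ is a genuine approximation error and is then controlled exactly by the trace argument you outline, which is also what the paper does.
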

\begin{proof}
  By Lemma \ref{le_Galerkin}, we have that
  \begin{equation*}
    a_h(\bm{u} - \bm{u}_h, \bm{p} - \bm{p}_h; \bm{v}_h, \bm{q}_h) = 0,
    \quad \forall (\bm{v}_h, \bm{q_h}) \in \bmr{V}_h^m \times
    \bmr{\Sigma}_h^m,
  \end{equation*}
  Together with Lemma \ref{le_continuity} and Lemma
  \ref{le_coercivity}, we obtain that
  \begin{equation*}
  \begin{aligned}
    \enorm{(\bm{u}_h - \bm{v}_h, \bm{p}_h-\bm{q}_h)}^2 &\leq
    Ca_h(\bm{u}_h - \bm{v}_h, \bm{p}_h-\bm{q}_h; \bm{u}_h - \bm{v}_h,
    \bm{p}_h - \bm{q_h}) \\
    &= Ca_h(\bm{u} - \bm{v}_h, \bm{p}-\bm{q}_h; \bm{u}_h - \bm{v}_h,
    \bm{p}_h - \bm{q_h}) \\
    &\leq C \enorm{(\bm{u} - \bm{v}_h, \bm{p}-\bm{q}_h)}
    \enorm{(\bm{u}_h - \bm{v}_h, \bm{p}_h - \bm{q}_h)},
  \end{aligned}
  \end{equation*}
  for any $(\bm{v}_h, \bm{q}_h) \in \bmr{V}_h^m \times
  \bmr{\Sigma}_h^m$. We eliminate the term $\enorm{ (\bm{u} -
  \bm{v}_h, \bm{p}_h - \bm{q}_h)}$ on both sides and apply the
  triangle inequality to get that
  \begin{equation}
    \enorm{(\bm{u}-\bm{u}_h, \bm{p}-\bm{p}_h)} \leq C \inf_{(\bm{v}_h,
    \bm{q}_h) \in \bmr{V}_h^m \times \bmr{\Sigma}_h^m} \enorm{(\bm{u} -
    \bm{v}_h, \bm{p} - \bm{q}_h)}.
    \label{eq_Cea}
  \end{equation}
  We denote by $(\bm{u}_I = \Pi_N \bm{u}, \bm{p}_I = \Pi_N \bm{p}) \in
  \bmr{V}_h^m \times \bmr{\Sigma}_h^m$ the \Ned interpolants of the
  second kind to the exact solution $(\bm{u}, \bm{p})$.  Using the
  interpolation estimate \cite[Theorem 5.41]{Monk2003finite} and the
  trace estimate, we arrive at 
  \begin{equation*}
  \begin{aligned}
    \unorm{\bm{u} - \bm{u}_I} &\leq C h^m \left( \| \bm{u}
    \|_{H^{m+1}(\Omega)} + \| \nabla \times \bm{u}
    \|_{H^{m+1}(\Omega)} \right), \\
    \pnorm{\bm{p} - \bm{p}_I} &\leq C h^m \left( \| \bm{p}
    \|_{H^{m+1}(\Omega)} + \| \nabla \times \bm{p}
    \|_{H^{m+1}(\Omega)} \right).
  \end{aligned}
  \end{equation*}
  Substituting the above two estimates into \eqref{eq_Cea} implies the
  error estimate \eqref{eq_errorestimate}, which completes the proof.
\end{proof}

Notice that the framework of the least squares finite element method
has a natural mesh refinement indicator, which is exactly the least
squares functional defined as \eqref{eq_functionalJ}. Therefore, we
define an element indicator $\eta_K$ for the element $K$ as 
\begin{equation}
  \begin{aligned}
    \eta_K^2 :=   \|  \nabla \times \bm{p}_h - & k \bm{u}_h -
    \wt{\bm{f}} \|_{L^2(K)}^2  + \| \nabla \times \bm{u}_h - k
    \bm{p}_h \|_{L^2(K)}^2  \\
    + &  \sum_{f \in \MFh^i \cap \mc{F}(K)}  \frac{1}{h_f} \left( \|
    \jump{\un \times \bm{u}_h }\|_{L^2(f)}^2 + \| \jump{\un \times
    \bm{p}_h } \|_{L^2(f)}^2 \right) \\ 
    + & \sum_{f \in \MFh^b \cap \mc{F}(K)} \frac{1}{h_f} \| \un \times
    \bm{u}_h - \un \times \bm{g} \|_{L^2(f)}^2. \\
  \end{aligned}
  \label{eq_etaK}
\end{equation}
The adaptive procedure consists of loops of the standard form:
\begin{displaymath}
  \text{Solve} \  \rightarrow \  \text{Estimate}\  \rightarrow \
  \text{Mark} \ \rightarrow \  \text{Refine}.
\end{displaymath}
Ultimately, we present the following adaptive algorithm for solving
the time-harmonic Maxwell equations:
\begin{enumerate}[Step 1]
  \item Given the initial mesh $\mc{T}_0$ and a positive parameter
    $\theta$, and set the iteration number $l = 0$;
  \item Solve the time-harmonic Maxwell equations on the mesh
    $\mc{T}_l$;
  \item Obtain the error indicator $\eta_K$ for all $K \in \mc{T}_l$
    with respect to the numerical solutions from the Step 2;
  \item Find the minimal subset $\mc{M} \subset
    \mc{T}_l$ such that $\theta \sum_{K \in \mc{T}_l} \eta_K^2 \leq
    \sum_{K \in \mc{M}} \eta_K^2$ and mark all elements in $\mc{M}$.
  \item Refine all marked elements to generate the next level mesh
    $\mc{T}_{l + 1}$;
  \item If the stop criterion is not satisfied, then go to the Step 2 
    and set $l = l + 1$.
\end{enumerate}


\section{Numerical Results}
\label{sec_numericalresults}
Below we present several numerical examples in two dimensions and
three dimensions to demonstrate the performance of the proposed
method. In all these numerical experiments, the parameter $\mu$ in
\eqref{eq_functionalJ} is selected to be $1$ and we adopt the BiCGstab
solver together with the ILU preconditioner to solve the resulting
linear algebraic system.

\begin{figure}
  \centering
  \includegraphics[width=0.4\textwidth]{./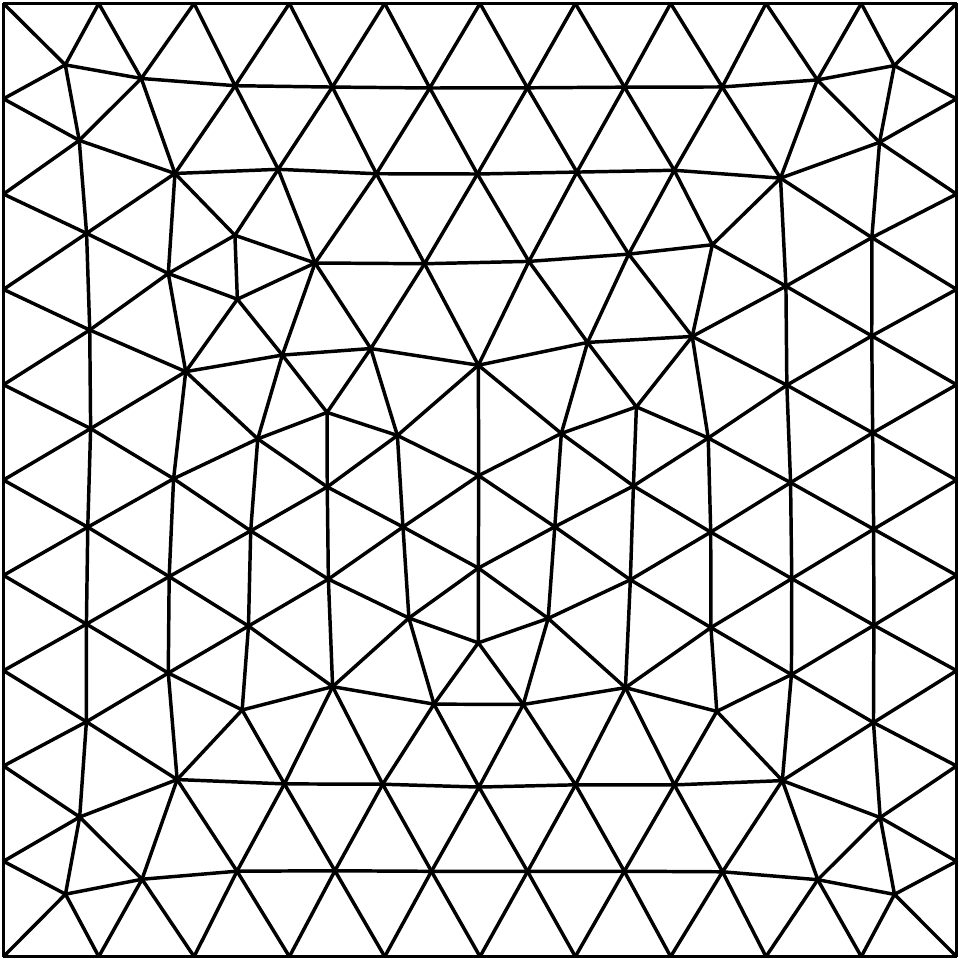}
  \hspace{25pt}
  \includegraphics[width=0.4\textwidth]{./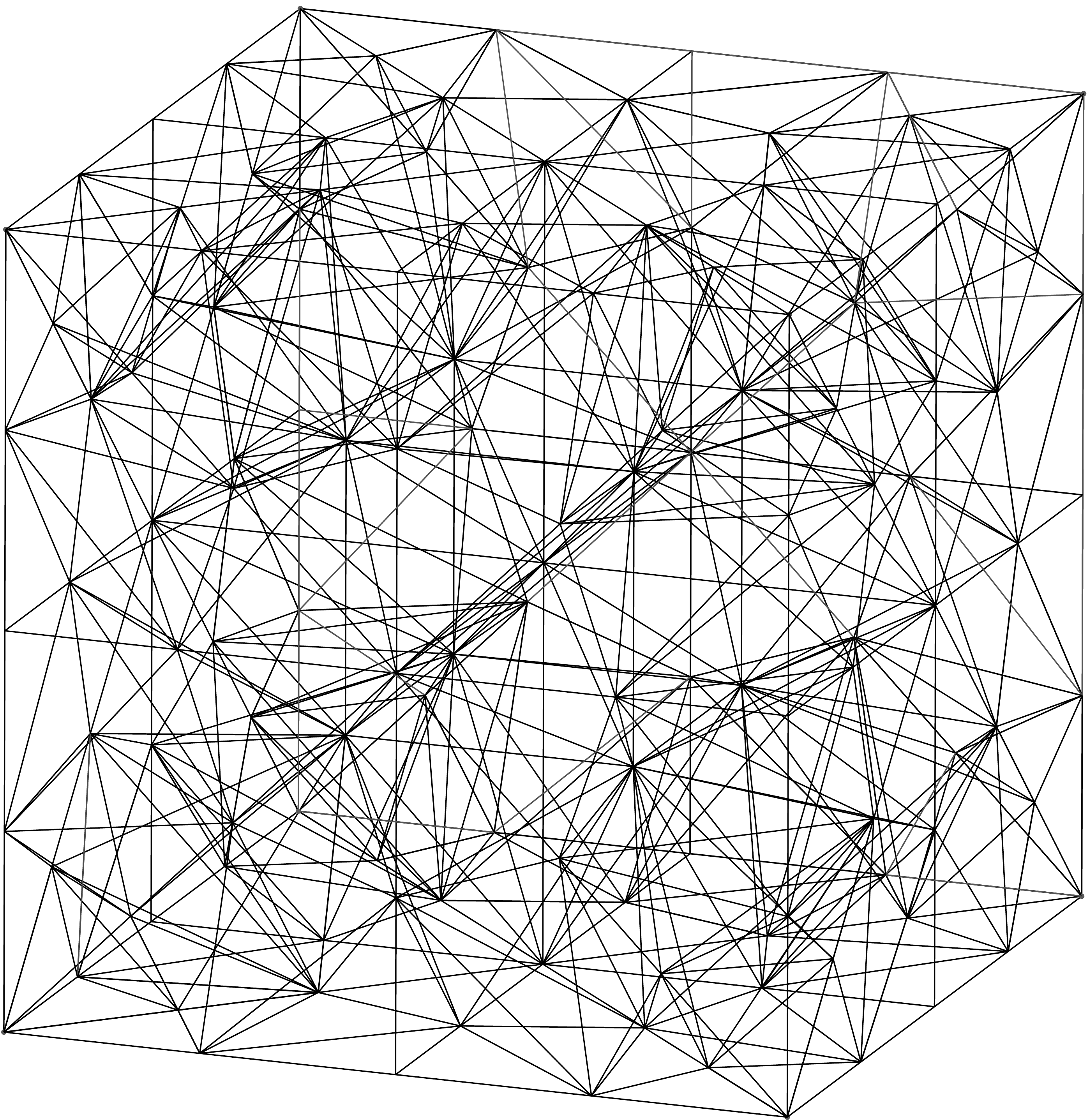}
  \caption{2d triangular partition with $ h = 1/10$ (left) / 3d
  tetrahedral partition with $h = 1/4$ (right).}
  \label{fig_partition}
\end{figure}

\noindent \textbf{Example 1.} For the first example, we consider a
smooth problem defined on the unit square domain $\Omega = (0, 1)^2$.
The exact solution to the time-harmonic Maxwell equations is given by
the smooth field \cite{Houston2005interior},
\begin{displaymath}
  \bm{u}(x, y) = \begin{bmatrix}
    \sin(ky) \\ \sin(kx)
  \end{bmatrix},
\end{displaymath}
and the source term $\bm{f}$ and the non-homogeneous boundary data
$\bm{g}$ are chosen accordingly. We solve this problem on a series of
shape-regular triangular meshes with the mesh size $h = 1/5$, $h =
1/10$, $\ldots$, $1/40$ (see Fig.~\ref{fig_partition}). The
convergence histories with the wave number $k = 1, 2, 8$ for the
accuracy $1 \leq m \leq 3$ are presented in Tab.~\ref{tab_ex1k1},
Tab.~\ref{tab_ex1k2} and Tab.~\ref{tab_ex1k8}, respectively. From
the numerical errors, we observe that the error under the energy norm
$\enorm{ (\bm{u} - \bm{u}_h, \bm{p} - \bm{p}_h)}$
converges to zero with the optimal speed $O(h^m)$ as the mesh size
approaches zero. In addition, for the $L^2$ errors, we can see that
$\|\bm{u} - \bm{u}_h \|_{L^2(\Omega)}$ and $ \| \bm{p} - \bm{p}_h
\|_{L^2(\Omega)}$ converge to zero at the rate $O(h^m)$ and
$O(h^{m+1})$, respectively, as the mesh is refined. As the wave number
$k$ increases, the errors in the approximation to the exact solution
also become larger under all error measurements. Here, we note that
the numerical results are consistent with those in
\cite{Houston2005interior}. In addition, all the computed convergence
rates agree with the error estimate given in Theorem
\ref{th_errorestimate}. 

\begin{table}
  \centering
    \renewcommand\arraystretch{1.3}
  \scalebox{1.}{
  \begin{tabular}{p{0.5cm} | p{3.3cm} | p{1.6cm} | p{1.6cm} | p{1.6cm}
    | p{1.6cm} | p{1cm}}
    \hline\hline 
    $m$ & mesh size & $1/10$ &  $1/20$ & $1/40$ & $1/80$ & order \\
    \hline
    \multirow{3}{*}{$1$} & $ \enorm{ (\bm{u} - \bm{u}_h, \bm{p} -
    \bm{p}_h)}$ 
    & 3.333e-2 & 1.667e-2 & 8.333e-3 & 4.166e-3 & 1.00 \\
    \cline{2-7} 
    & $ \|\bm{u} - \bm{u}_h\|_{L^2(\Omega)} $ 
    & 1.638e-2 & 8.168e-3 & 4.073e-3 & 1.663e-2 & 1.00 \\
    \cline{2-7} 
    & $ \|\bm{p} - \bm{p}_h\|_{L^2(\Omega)} $ 
    & 4.813e-4 & 1.208e-4 & 3.011e-5 & 7.512e-6 & 2.00 \\
    \hline
    \multirow{3}{*}{$2$} & $ \enorm{ (\bm{u} - \bm{u}_h, \bm{p} -
    \bm{p}_h)}$ 
    & 3.168e-4 & 7.922e-5 & 1.979e-5 & 4.950e-6 & 2.00 \\
    \cline{2-7} 
    & $ \|\bm{u} - \bm{u}_h\|_{L^2(\Omega)} $ 
    & 8.417e-5 & 2.102e-5 & 5.250e-6 & 1.311e-6 & 2.00 \\
    \cline{2-7} 
    & $ \|\bm{p} - \bm{p}_h\|_{L^2(\Omega)} $ 
    & 1.722e-6 & 2.156e-7 & 2.695e-8 & 3.369e-9 & 3.00 \\
    \hline
    \multirow{3}{*}{$3$} & $ \enorm{ (\bm{u} - \bm{u}_h, \bm{p} -
    \bm{p}_h)}$ 
    & 2.731e-6 & 3.402e-7 & 4.251e-8 & 5.312e-9 & 3.00 \\
    \cline{2-7} 
    & $ \|\bm{u} - \bm{u}_h\|_{L^2(\Omega)} $ 
    & 1.122e-6 & 1.383e-7 & 1.722e-8 & 2.155e-9 & 3.00 \\
    \cline{2-7} 
    & $ \|\bm{p} - \bm{p}_h\|_{L^2(\Omega)} $ 
    & 1.827e-8 & 1.140e-9 & 7.133e-11 & 4.472e-12 & 4.00 \\
    \hline
  \end{tabular}}
  \caption{Convergence history for Example 1 with $k=1$.}
  \label{tab_ex1k1}
\end{table}

\begin{table}
  \centering
    \renewcommand\arraystretch{1.3}
  \scalebox{1.}{
  \begin{tabular}{p{0.5cm} | p{3.3cm} | p{1.6cm} | p{1.6cm} | p{1.6cm}
    | p{1.6cm} | p{1cm}}
    \hline\hline 
    $m$ & $h$ & $1/10$ &  $1/20$ & $1/40$ & $1/80$ & order \\
    \hline
    \multirow{3}{*}{$1$} & $ \enorm{ (\bm{u} - \bm{u}_h, \bm{p} -
    \bm{p}_h)}$ 
    & 1.342e-1 & 5.920e-2 & 2.846e-2 & 1.408e-2 & 1.02 \\
    \cline{2-7} 
    & $ \|\bm{u} - \bm{u}_h\|_{L^2(\Omega)} $ 
    & 1.911e-2 & 4.873e-3 & 1.221e-3 & 3.049e-4 & 2.00 \\
    \cline{2-7} 
    & $ \|\bm{p} - \bm{p}_h\|_{L^2(\Omega)} $ 
    & 9.042e-2 & 3.033e-2 & 1.191e-2 & 5.446e-2 & 1.03 \\
    \hline
    \multirow{3}{*}{$2$} & $ \enorm{ (\bm{u} - \bm{u}_h, \bm{p} -
    \bm{p}_h)}$ 
    & 2.493e-3 & 6.229e-4 & 1.556e-5 & 3.891e-5 &  2.00 \\
    \cline{2-7} 
    & $ \|\bm{u} - \bm{u}_h\|_{L^2(\Omega)} $ 
    & 5.241e-4 & 1.307e-4 & 3.263e-5 & 8.149e-6 &  2.00 \\
    \cline{2-7} 
    & $ \|\bm{p} - \bm{p}_h\|_{L^2(\Omega)} $ 
    & 2.223e-5 & 2.473e-6 & 2.999e-7 & 3.722e-8 & 3.01 \\
    \hline
    \multirow{3}{*}{$3$} & $ \enorm{ (\bm{u} - \bm{u}_h, \bm{p} -
    \bm{p}_h)}$ 
    & 3.68e-5 & 4.610e-6 & 5.769e-7 & 7.264e-8 & 3.00 \\
    \cline{2-7} 
    & $ \|\bm{u} - \bm{u}_h\|_{L^2(\Omega)} $ 
    & 6.186e-6 & 7.759e-7 & 9.775e-8 & 1.228e-8 & 3.00 \\
    \cline{2-7} 
    & $ \|\bm{p} - \bm{p}_h\|_{L^2(\Omega)} $ 
    & 2.055e-7 & 1.289e-8 & 8.080e-10 &  5.061e-11 & 4.00 \\
    \hline
  \end{tabular}}
  \caption{Convergence history for Example 1 with $k = 2$.}
  \label{tab_ex1k2}
\end{table}

\begin{table}
  \centering
    \renewcommand\arraystretch{1.3}
  \scalebox{1.}{
  \begin{tabular}{p{0.5cm} | p{3.3cm} | p{1.6cm} | p{1.6cm} | p{1.6cm}
    | p{1.6cm} | p{1cm}}
    \hline\hline 
    $m$ & $h$ & $1/10$ &  $1/20$ & $1/40$ & $1/80$ & order \\
    \hline
    \multirow{3}{*}{$1$} & $ \enorm{ (\bm{u} - \bm{u}_h, \bm{p} -
    \bm{p}_h)}$ 
    & 7.531e-0 & 4.277e-0 & 1.652e-0 & 5.121e-1 & 1.69 \\
    \cline{2-7} 
    & $ \|\bm{u} - \bm{u}_h\|_{L^2(\Omega)} $ 
    & 6.482e-1 & 3.641e-1 & 1.382e-1 & 4.060e-2 & 1.76 \\
    \cline{2-7} 
    & $ \|\bm{p} - \bm{p}_h\|_{L^2(\Omega)} $ 
    & 6.432e-1 & 3.566e-1 & 1.337e-1 & 3.848e-2 & 1.80 \\
    \hline
    \multirow{3}{*}{$2$} & $ \enorm{ (\bm{u} - \bm{u}_h, \bm{p} -
    \bm{p}_h)}$ 
    & 2.787e-1 & 4.213e-2 & 9.898e-3 & 2.462e-3 & 2.00 \\
    \cline{2-7} 
    & $ \|\bm{u} - \bm{u}_h\|_{L^2(\Omega)} $ 
    & 2.079e-2 & 2.123e-3 & 4.336e-4 & 1.067e-4 & 2.02 \\
    \cline{2-7} 
    & $ \|\bm{p} - \bm{p}_h\|_{L^2(\Omega)} $ 
    & 1.914e-2 & 1.246e-3 & 7.962e-5 & 5.359e-6 & 3.80 \\
    \hline
    \multirow{3}{*}{$3$} & $ \enorm{ (\bm{u} - \bm{u}_h, \bm{p} -
    \bm{p}_h)}$ 
    & 9.493e-3 &  1.182e-3 &  1.478e-4 & 1.848e-5 & 3.00 \\
    \cline{2-7} 
    & $ \|\bm{u} - \bm{u}_h\|_{L^2(\Omega)} $ 
    & 3.841e-4 & 4.665e-5 &  5.875e-6 & 7.398e-7 & 3.00 \\
    \cline{2-7} 
    & $ \|\bm{p} - \bm{p}_h\|_{L^2(\Omega)} $ 
    & 9.612e-5 & 3.724e-6 & 2.219e-7 & 1.386e-8 & 4.00 \\
    \hline
  \end{tabular}}
  \caption{Convergence history for Example 1 with $k = 8$.}
  \label{tab_ex1k8}
\end{table}

\noindent \textbf{Example 2.} In this test, we consider a
three-dimensional problem in the unit cube $\Omega = (0, 1)^3$. We
solve the test problem on a series of tetrahedral meshes with the
resolution $h = 1/2$, $1/4$, $1/8$, $1/16$, see
Fig.~\ref{fig_partition}. The analytical solution is selected as  
\begin{displaymath}
  \bm{u}(x, y, z) = \begin{bmatrix}
    \sin(ky) \sin(kz) \\ \sin(kx) \sin(kz) \\ \sin(kx) \sin(ky)
  \end{bmatrix},
\end{displaymath}
and the data functions $\bm{f}$ and $\bm{g}$ are taken suitably. We
use the approximation spaces $\bmr{V}_h^m \times \bmr{\Sigma}_h^m$
with $1 \leq m \leq 3$ to approximate $\bm{u}$ and $\bm{p}$,
respectively. The numerical results with the wave number $k = 1$ are
shown in Tab.~\ref{tab_ex3d1}. We observe that under the energy norm
$\enorm{ \cdot}$ the numerical error tends to zero at the optimal
speed $O(h^m)$ as the mesh size decreases to zero. For the $L^2$
errors, our method shows a sub-optimal convergence rates for both
variables in three dimensions. We note that all numerical convergence
orders are still consistent with the theoretical error estimate.  

\begin{table}
  \centering
    \renewcommand\arraystretch{1.3}
  \scalebox{1.}{
  \begin{tabular}{p{0.5cm} | p{3.3cm} | p{1.6cm} | p{1.6cm} | p{1.6cm}
    | p{1.6cm} | p{1cm}}
    \hline\hline 
    $m$ & $h$ & $1/2$ &  $1/4$ & $1/8$ & $1/16$ & order \\
    \hline
    \multirow{3}{*}{$1$} & $ \enorm{ (\bm{u} - \bm{u}_h, \bm{p} -
    \bm{p}_h)}$ 
    & 2.410e-1 & 1.262e-1 & 6.433e-2 & 3.252e-1 & 1.00 \\
    \cline{2-7} 
    & $ \|\bm{u} - \bm{u}_h\|_{L^2(\Omega)} $ 
    & 9.731e-2 & 5.958e-2 & 3.239e-2 & 1.663e-2 & 0.99 \\
    \cline{2-7} 
    & $ \|\bm{p} - \bm{p}_h\|_{L^2(\Omega)} $ 
    & 3.343e-2 & 1.681e-2 & 8.051e-2 & 3.942e-2 & 1.00 \\
    \hline
    \multirow{3}{*}{$2$} & $ \enorm{ (\bm{u} - \bm{u}_h, \bm{p} -
    \bm{p}_h)}$ 
    & 3.638e-2 & 8.922e-3 & 2.221e-3 & 5.556e-4 & 2.00 \\
    \cline{2-7} 
    & $ \|\bm{u} - \bm{u}_h\|_{L^2(\Omega)} $ 
    & 1.795e-2 & 4.381e-3 & 1.098e-3 & 2.773e-4 & 1.99 \\
    \cline{2-7} 
    & $ \|\bm{p} - \bm{p}_h\|_{L^2(\Omega)} $ 
    & 7.575e-3 & 1.637e-3 & 3.767e-4 & 8.921e-5 & 2.05 \\
    \hline
    \multirow{3}{*}{$3$} & $ \enorm{ (\bm{u} - \bm{u}_h, \bm{p} -
    \bm{p}_h)}$ 
    & 1.876e-3 & 2.347e-4 & 2.953e-5 & 3.712e-6 & 3.00 \\
    \cline{2-7} 
    & $ \|\bm{u} - \bm{u}_h\|_{L^2(\Omega)} $ 
    & 6.916e-4 & 9.972e-5 & 1.339e-5 & 1.733e-6 & 2.96 \\
    \cline{2-7} 
    & $ \|\bm{p} - \bm{p}_h\|_{L^2(\Omega)} $ 
    & 6.650e-4 & 6.301e-5 & 6.513e-6 & 7.336e-7 & 3.15 \\
    \hline
  \end{tabular}}
  \caption{Convergence history for Example 2.}
  \label{tab_ex3d1}
\end{table}

\noindent \textbf{Example 3.} In this test, we investigate the
performance of the proposed method for dealing with the problem that
involves a singularity at the corner. The domain $\Omega$ is the
L-shaped domain $\Omega = (-1, 1)^2 \backslash [0, 1) \times (-1, 0]$
and we choose the exact solution, in polar coordinates $(r, \theta)$,
to be 
\begin{equation}
    \bm{u}(x, y) = \nabla ( (kr)^\alpha \sin(\alpha \theta)) +
    \begin{bmatrix}
    \sin(ky) \\ \sin(kx) \\
  \end{bmatrix},
  \label{eq_ex3singular}
\end{equation}
with $\alpha = 2/3$. Notice that the function $\bm{u}$ contains a
singularity at $(0, 0)$ and $\bm{u}$ only belongs to the space
$H^{\alpha - \varepsilon}(\Omega)$ for arbitrary small $\varepsilon$. 
The mesh size of the coarsest triangular mesh is $h = 1/5$ and we
uniformly refine the initial mesh for three times to solve the problem, 
see Fig.~\ref{fig_Ldomain}. We list the numerical errors under the
$L^2$ norms against the mesh size in Tab.~\ref{tab_exsingular}. From
the table, we can see that the error $ \| \bm{u} - \bm{u}_h
\|_{L^2(\Omega)}$ converges to zero at the rate $O(h^{\alpha})$ for all
$1 \leq m \leq 3$, which is in agreement with the regularity of the
function $\bm{u}$. For the variable $p$, the numerically detected
convergence rate is about $O(h^{\alpha + 2/3})$ in terms of the $L^2$
norm. The explanation of this convergence rate can be traced to the
L-shaped domain. We note that the observed convergence rates in this
example are consistent with the results in \cite{Houston2005interior,
Nguyen2011hybridizable}.

\begin{figure}
  \centering
  \includegraphics[width=0.4\textwidth]{./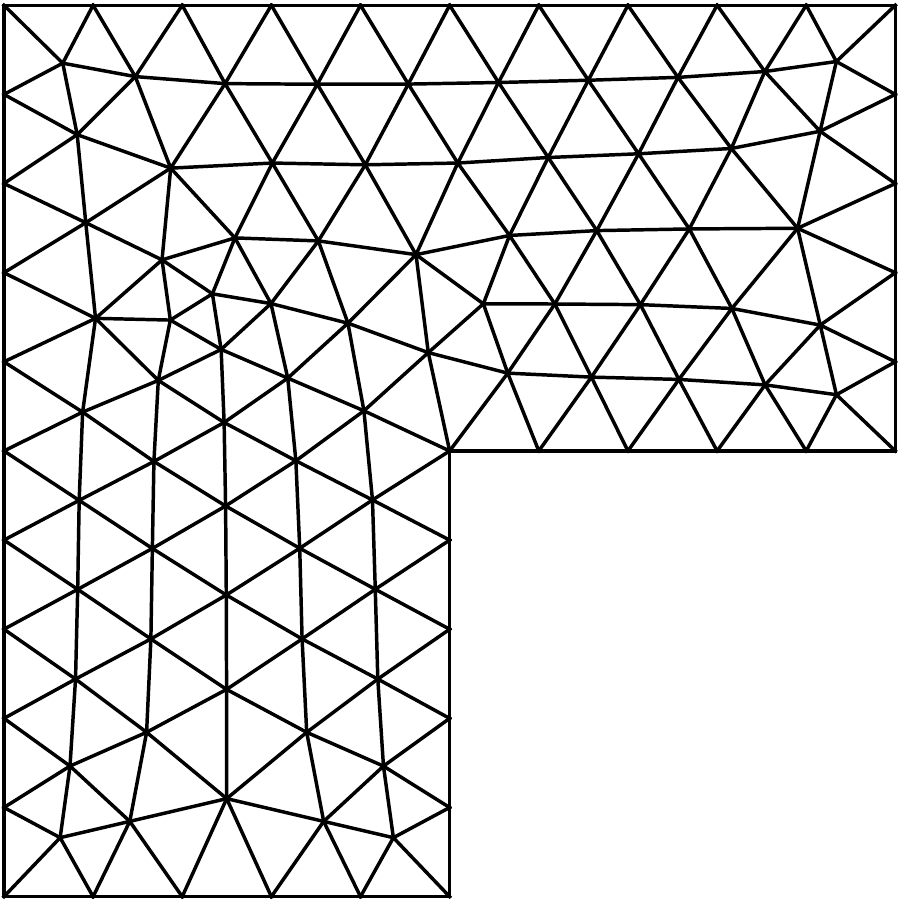}
  \hspace{25pt}
  \includegraphics[width=0.4\textwidth]{./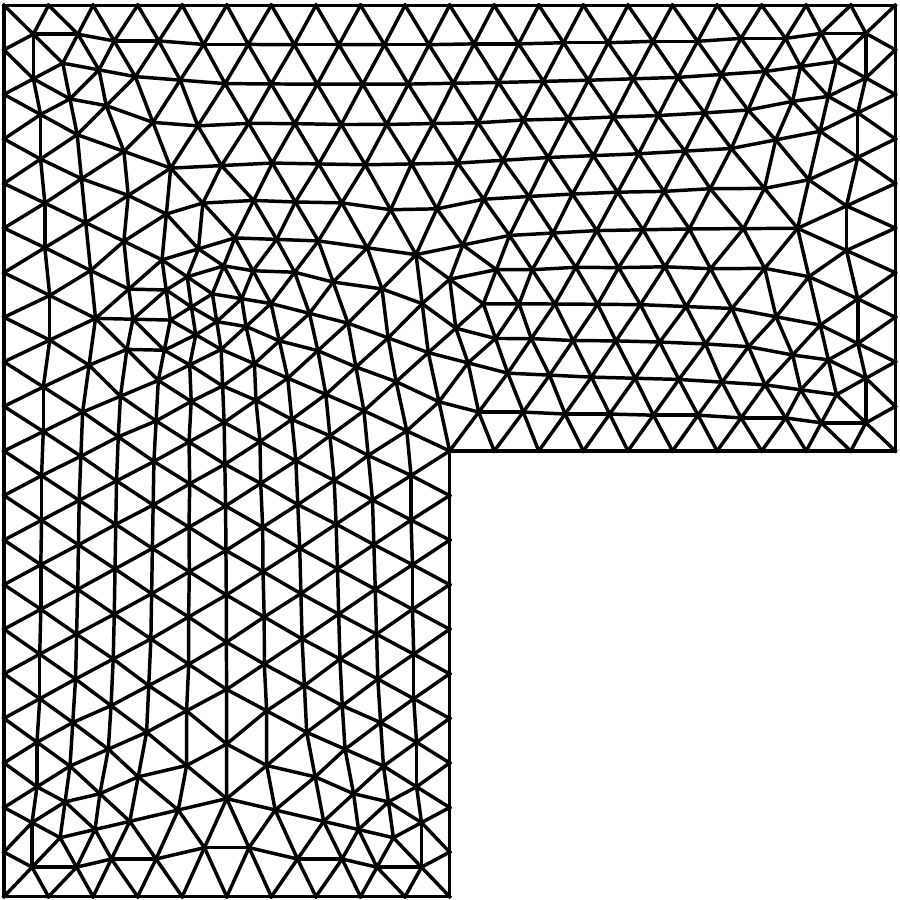}
  \caption{Triangular mesh for L-shaped domain with $h = 1/5$ (left) /
  $h = 1/10$ (right).}
  \label{fig_Ldomain}
\end{figure}

\begin{table}
  \centering
    \renewcommand\arraystretch{1.3}
  \scalebox{1.}{
  \begin{tabular}{p{0.5cm} | p{3.3cm} | p{1.6cm} | p{1.6cm} | p{1.6cm}
    | p{1.6cm} | p{1cm}}
    \hline\hline 
    $m$ & $h$ & $1/5$ &  $1/10$ & $1/20$ & $1/40$ & order \\
    \hline
    \multirow{2}{*}{$1$} 
    & $ \|\bm{u} - \bm{u}_h\|_{L^2(\Omega)} $ 
    & 8.817e-2 &  5.112e-2 &  3.002e-2 &  1.801e-2 & 0.73 \\
    \cline{2-7} 
    & $ \|\bm{p} - \bm{p}_h\|_{L^2(\Omega)} $ 
    & 2.115e-2 & 1.136e-2 & 5.039e-3 & 2.096e-3 & 1.26 \\
    \hline
    \multirow{2}{*}{$2$} 
    & $ \|\bm{u} - \bm{u}_h\|_{L^2(\Omega)} $ 
    & 4.001e-2 & 2.512e-2 &  1.581e-2 &  9.958e-3 & 0.67 \\
    \cline{2-7} 
    & $ \|\bm{p} - \bm{p}_h\|_{L^2(\Omega)} $ 
    & 5.598e-3 & 2.070e-3 &  8.020e-4 &  3.163e-4 & 1.34 \\
    \hline
    \multirow{2}{*}{$3$} & $ \|\bm{u} - \bm{u}_h\|_{L^2(\Omega)} $ 
    & 2.723e-2 & 1.718e-2 & 1.083e-2 & 6.820e-3 & 0.67 \\
    \cline{2-7} 
    & $ \|\bm{p} - \bm{p}_h\|_{L^2(\Omega)} $ 
    & 8.478e-4 & 3.331e-4 & 1.309e-4 & 5.121e-5 & 1.35 \\ 
    \hline
  \end{tabular}}
  \caption{Convergence history for Example 3.}
  \label{tab_exsingular}
\end{table}

\noindent \textbf{Example 4.} In this test, we solve a low-regularity
problem in three dimensions. We consider the unit cubic domain $\Omega
= (0, 1)^3$ and we select the analytical solution $\bm{u}$ as 
\begin{displaymath}
  \bm{u}(x, y, z) = \nabla (|\bm{x}|^\alpha) = \nabla ((x^2 + y^2 +
  z^2)^{\alpha / 2}),
\end{displaymath}
with $\alpha = 1.2$. The source function $\bm{f}$ and the
inhomogenuous boundary data $\bm{g}$ are selected accordingly, and the
wave number $k$ is set as $1$.  Clearly, $\bm{u}$ contains a
singularity near the corner $(0, 0, 0)$, which implies that the
function $\bm{u}$ lies in the space $H^{\alpha - 1/2 -
\varepsilon}(\Omega)$ with any $\varepsilon > 0$. We adopt the same
uniform meshes as in Example 2 to solve this problem. We list the
errors in approximation to $(\bm{u}, \bm{p})$ under the $L^2$ norms in
Tab.~\ref{tab_ex3dsingular}. The convergence rate of the $L^2$ error
$\| \bm{p} - \bm{p}_h \|_{L^2(\Omega)}$ is numerically detected to be
about $0.7$ for all $m$. The rate is perfectly in agreement with the
regularity of the function $\bm{u}$. For the $L^2$ error $\| \bm{u} -
\bm{u}_h \|_{L^2(\Omega)}$, the numerical convergence rate seems to be
$O(h)$ higher than the variable $\bm{p}$, which is optimal as the mesh
size tends to zero. We can not find this numerical phenomenon in the
smooth case (see Tab.~\ref{tab_ex3d1}). The optimal $L^2$ convergence
rate may depend on some properties of the exact solution and the mesh
size, and we hope the reason can be clarified in our future research. 

\begin{table}
  \centering
    \renewcommand\arraystretch{1.3}
  \scalebox{1.}{
  \begin{tabular}{p{0.5cm} | p{3.3cm} | p{1.6cm} | p{1.6cm} | p{1.6cm}
    | p{1.6cm} | p{1cm}}
    \hline\hline 
    $m$ & $h$ & $1/2$ &  $1/4$ & $1/8$ & $1/16$ & order \\
    \hline
    \multirow{2}{*}{$1$} 
    & $ \|\bm{u} - \bm{u}_h\|_{L^2(\Omega)} $ 
    & 4.082e-2 & 1.449e-2 & 4.836e-3 & 1.548e-3 & 1.65 \\
    \cline{2-7} 
    & $ \|\bm{p} - \bm{p}_h\|_{L^2(\Omega)} $ 
    & 5.446e-3 & 3.746e-3 & 2.375e-3 & 1.475e-3 & 0.68 \\
    \hline
    \multirow{2}{*}{$2$} 
    & $ \|\bm{u} - \bm{u}_h\|_{L^2(\Omega)} $ 
    & 1.399e-2 & 4.375e-3 & 1.351e-3 & 4.158e-4 & 1.69 \\
    \cline{2-7} 
    & $ \|\bm{p} - \bm{p}_h\|_{L^2(\Omega)} $ 
    & 2.996e-3 & 1.801e-3 & 1.112e-3 & 6.850e-4 & 0.70 \\
    \hline
    \multirow{2}{*}{$3$} & $ \|\bm{u} - \bm{u}_h\|_{L^2(\Omega)} $ 
    & 5.846e-3 & 1.803e-3 & 5.553e-4 & 1.709e-4 & 1.69 \\
    \cline{2-7} 
    & $ \|\bm{p} - \bm{p}_h\|_{L^2(\Omega)} $ 
    & 2.446e-3 & 1.478e-3 & 9.060e-4 & 5.578e-4 & 0.70 \\
    \hline
  \end{tabular}}
  \caption{Convergence history for Example 4.}
  \label{tab_ex3dsingular}
\end{table}

\textbf{Example 5.} In this example, we test our adaptive algorithm
proposed in Section \ref{sec_method}. We solve the low-regularity
problem in Example 3. The domain is the L-shaped domain (see
Fig.~\ref{fig_Lrefinement}) and the exact solution is chosen as
\eqref{eq_ex3singular}, which lies in the space $H^{2/3 -
\varepsilon}(\Omega)$ for any $\varepsilon > 0$. For the adaptive
algorithm, we choose the parameter $\theta = 0.25$ and we use the
longest-edge bisection algorithm to refine the mesh. We consider the
linear accuracy $\bmr{V}_h^1 \times \bmr{\Sigma}_h^1$ to solve the
problem. The convergence history under the $L^2$ norms for the
adaptive refinement is displayed in Fig.~\ref{fig_ex5error}. For this
problem, as we demonstrated in Tab.~\ref{tab_exsingular}, with the
uniform refinement the $L^2$ errors tends to zero at the speed
$O(N^{-0.67/2})$ and $O(N^{-1.35/2})$ for the variables $\bm{u}$ and
$\bm{p}$, respectively, where $N$ is the number of elements in the
partition. In Fig.~\ref{fig_ex5error}, the convergence orders of the
error $\|\bm{u} - \bm{u}_h \|_{L^2(\Omega)}$ and the error $\| \bm{p}
- \bm{p}_h \|_{L^2(\Omega)}$ approach zero seem to be about
$O(N^{-1/2})$ and $O(N^{-1})$, respectively. We note that these two
convergence rates match the convergence rates of the smooth case
in Example 1. Moreover, we depict the triangular mesh after 10
adaptive steps in Fig.~\ref{fig_Lrefinement}.  Obviously, the
refinement is pronounced around the corner $(0, 0)$ where the exact
solution contains a singularity.

\begin{figure}[htb]
  \centering
  \includegraphics[width=0.4\textwidth]{./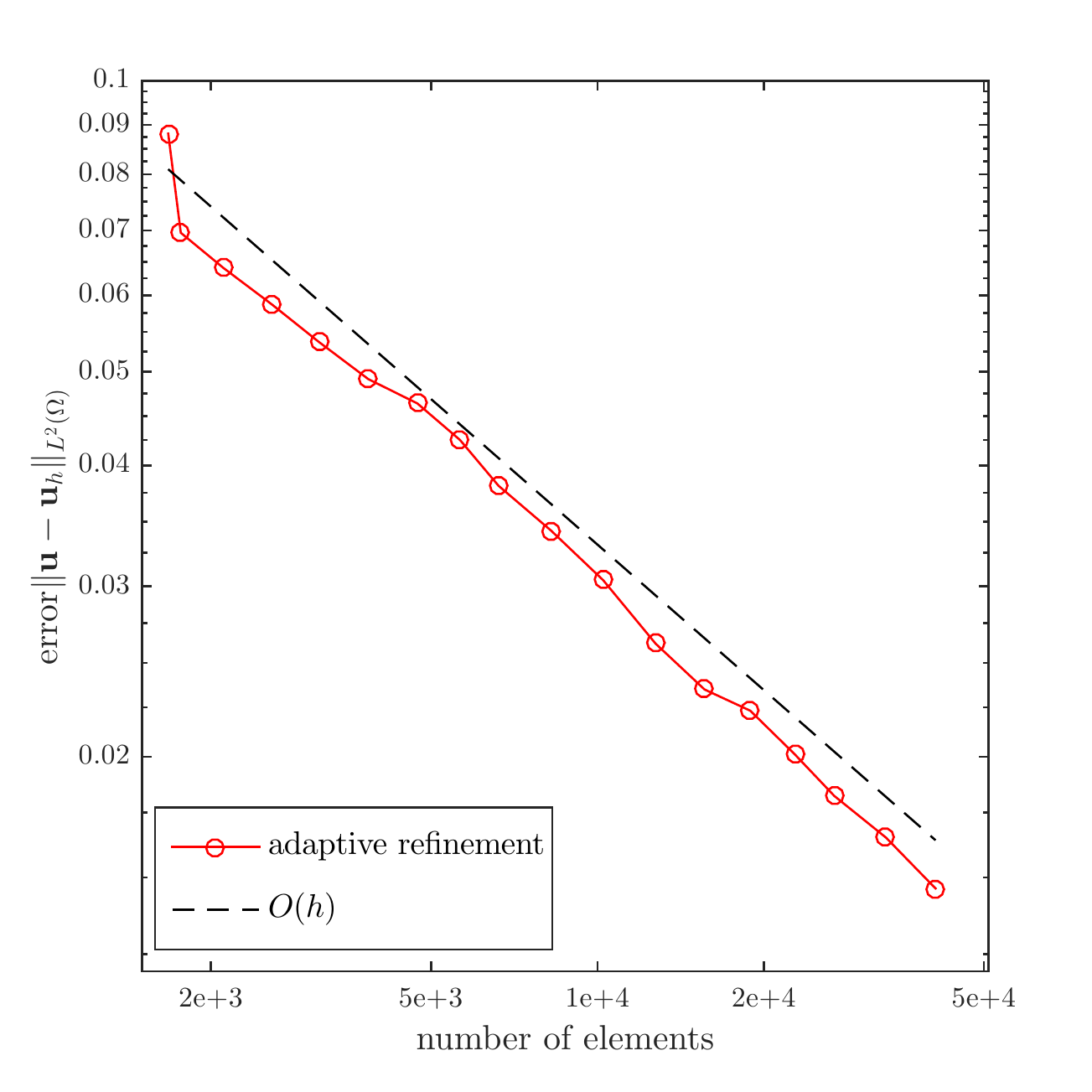}
  \includegraphics[width=0.4\textwidth]{./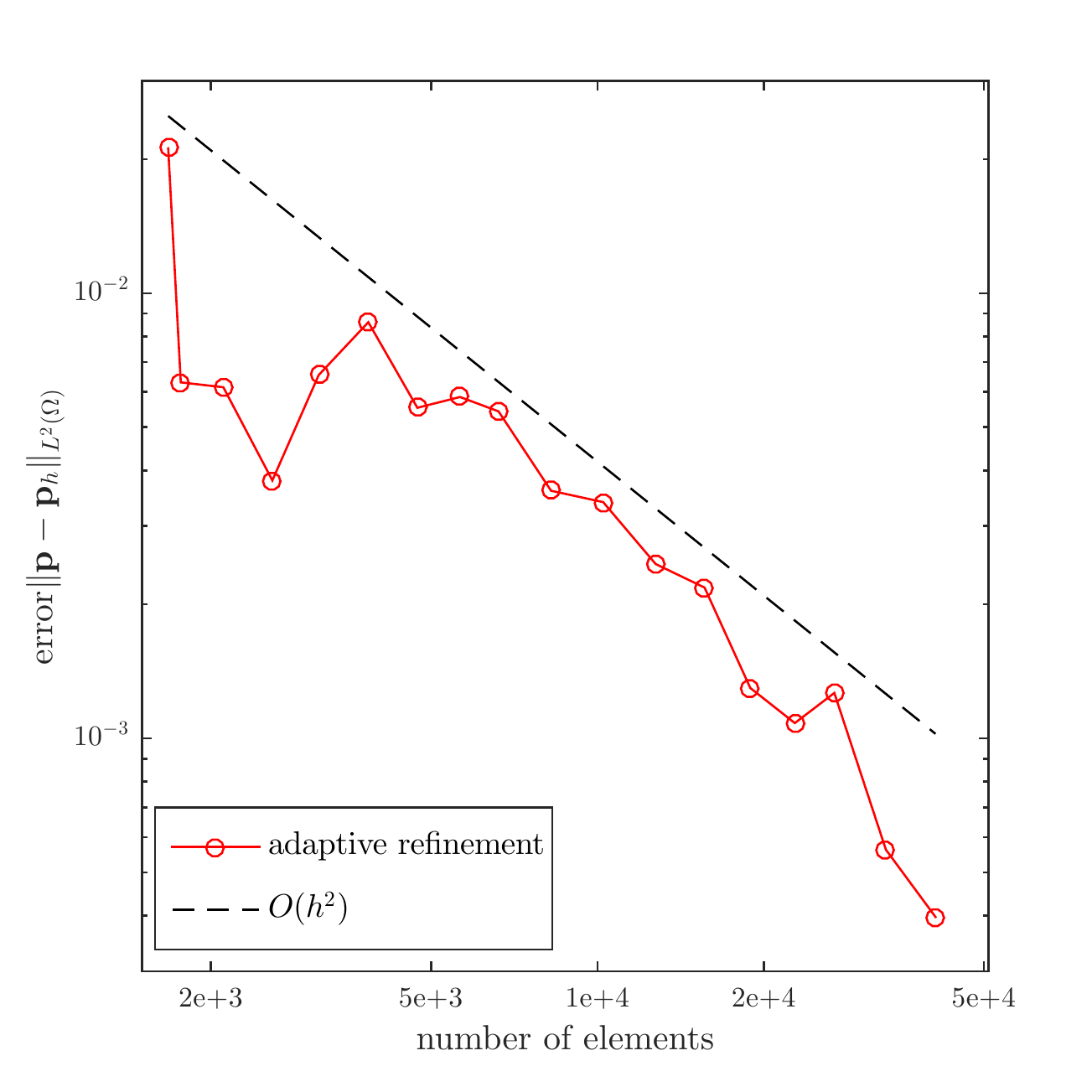}
  \caption{Convergence history for Example 5.}
  \label{fig_ex5error}
\end{figure}

\begin{figure}
  \centering
  \includegraphics[width=0.25\textwidth]{./figure/L0-crop.pdf}
  \hspace{15pt}
  \includegraphics[width=0.25\textwidth]{./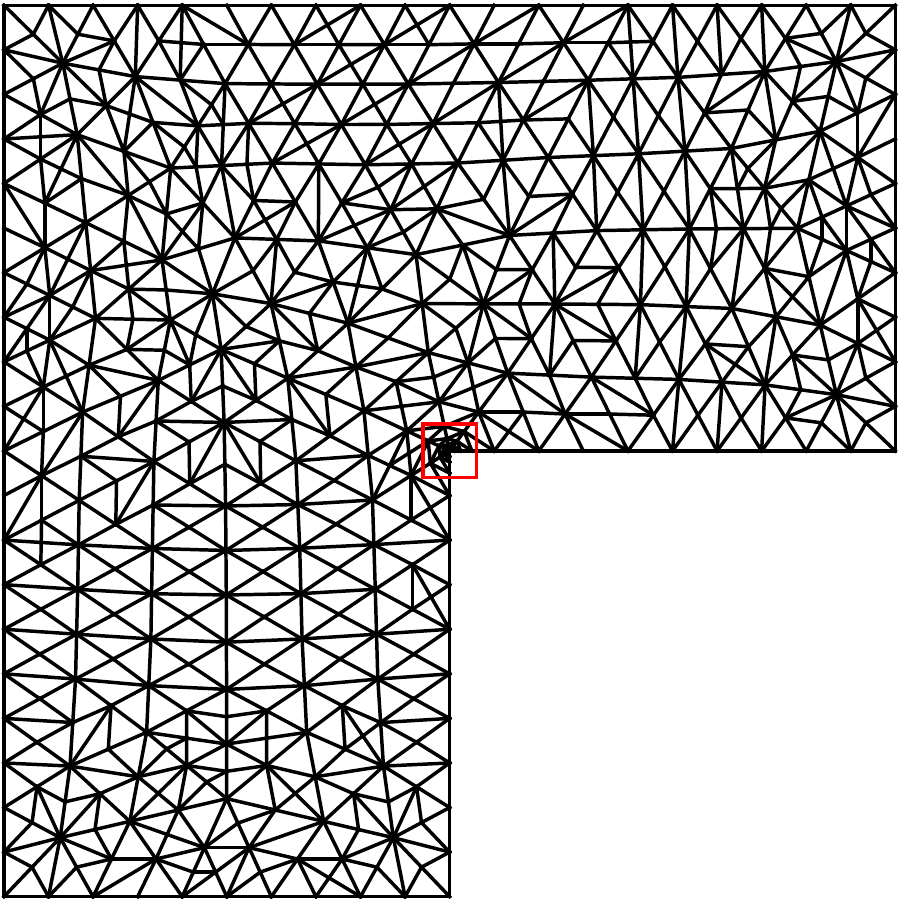}
  \hspace{15pt}
  \includegraphics[width=0.3\textwidth, height=0.25\textwidth]{./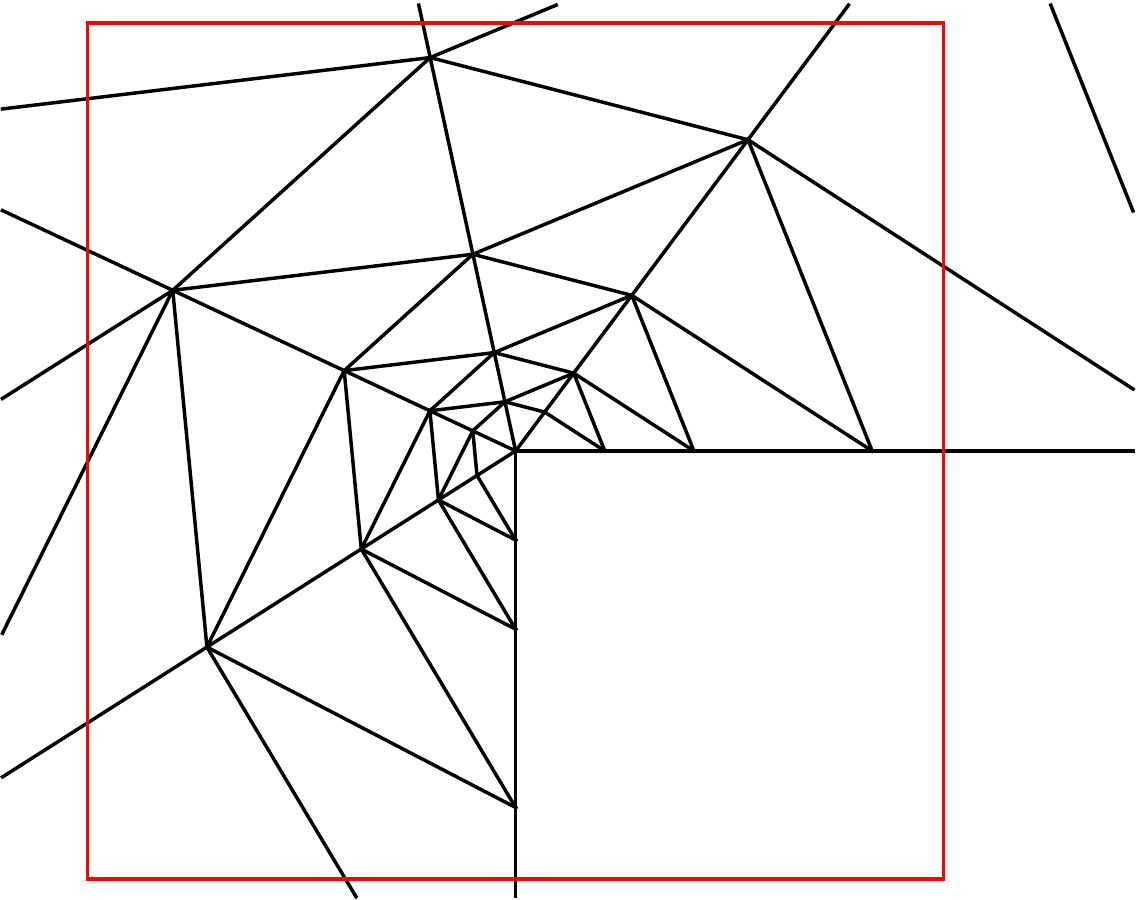}
  \caption{Initial mesh (left) / the triangular mesh after 10 adaptive
  refinement steps (middle) / the elements around the corner $(0, 0)$
  (right). }
  \label{fig_Lrefinement}
\end{figure}


\section{Conclusions}
We proposed a discontinuous least squares finite element method for
the time-harmonic Maxwell equations. Using discontinuous elements, we
designed a least squares functional with the weak imposition of the
tangential continuity on the interior faces. The convergence rates
were derived with respect to the energy norm and the $L^2$ norm.
Particularly, it was proved that our method is stable without any
constraint on the mesh size. Numerical results in two dimensions and
three dimensions illustrated the accuracy of our method.

\section*{Acknowledgements}
This research was supported by the Science Challenge Project (No.
TZ2016002) and the National Science Foundation in China (No.
11971041).


\bibliographystyle{amsplain}
\bibliography{../ref}

\end{document}